\newcommand{\rev}{}
\theoremstyle{plain}
\newtheorem{theorem}{Theorem}[section]
\newtheorem{corollary}[theorem]{Corollary}
\newtheorem{lemma}[theorem]{Lemma}
\newtheorem{proposition}[theorem]{Proposition}
\newtheorem*{thm*}{Theorem}
\theoremstyle{definition}
\newtheorem{remark}[theorem]{Remark}
\newtheorem{definition}[theorem]{Definition}
\newtheorem{example}[theorem]{Example}
\newcommand{\R}{\mathbb{R}}
\newcommand{\Z}{\mathbb{Z}}
\newcommand{\Q}{\mathbb{Q}}
\newcommand{\C}{\mathbb{C}}
\DeclareMathOperator{\Spec}{Spec}
\DeclareMathOperator{\Frac}{Frac}
\DeclareMathOperator*{\argmin}{arg\,min}
\title{The Reflection-Invariant Bispectrum: Signal Recovery in the Dihedral Model} 
\author{Dan Edidin, Josh Katz}
\address{Department of Mathematics, University of Missouri, Columbia, MO 65211}
\email{edidind@missouri.edu, jkgbh@mail.missouri.edu}
\thanks{The authors were supported by BSF grant 2020159 and NSF grant DMS2205626.}
\date{\today}
\begin{document}
%\maketitle

\begin{abstract}
 We study the problem of signal recovery in the dihedral
 multi-reference alignment (MRA) model, where a signal is observed
 under random actions of the dihedral group and corrupted by additive
 noise. While previous work has shown that cyclic invariants of degree
 three (the bispectrum) suffice to recover generic signals up to
 circular shift \cite{bendory2017bispectrum}, the dihedral setting
 introduces new challenges due to the group’s non-abelian
 structure. In particular, reflections prevent the diagonalization of
 the third moment tensor in the Fourier basis, making classical
 bispectrum techniques inapplicable.

In this work, we prove that the orbit of a generic signal in the
$n$-dimensional standard representation of the $2n$-element dihedral
group $D_{n}$ is uniquely determined by its invariant tensors of
degree at most three. This resolves an open question posed
in~\cite{bendory2022dihedral}, and establishes that the sample
complexity for dihedral MRA with uniform distribution is
$\omega(\sigma^6)$, matching the cyclic case.  \rev{Along the way we
  prove a result of independent interest
  (Theorem~\ref{thm.orthogonal}), namely that invariants of degree at
  most three separate generic real orbits in band-limited
  representations of the orthogonal group $O(2)$.

While frequency marching becomes computationally impractical in the
dihedral setting, we show numerically that a simple optimization
algorithm reliably recovers the signal from third-order moments, even
with random initialization.  Our results establish the dihedral model
as both a challenging and viable alternative to the cyclic model—one
that better reflects practical symmetry constraints and deserves
further exploration in both theoretical and applied settings.}

\end{abstract}
\maketitle

\section{Introduction}
We study the dihedral multi-reference alignment (MRA) model
\begin{equation} \label{eq:mra}
	y = g\cdot x + \varepsilon, \quad g\sim\rho, \quad \varepsilon\sim N(0,\sigma^2I),
\end{equation}
where 
\begin{itemize}
	\item   $x\in\C^n$ is a fixed (deterministic) signal to be estimated; 
	%\item $\rho$ is an unknown distribution  defined on the $2n$-element dihedral group $D_n$; 
	\item $g$ is a random element of the dihedral group $D_{n}$ 
    which acts on the signal by circular translation and reflection (see Figure~\ref{fig:example});   
	%\item 	$\varepsilon$ is a normal isotropic i.i.d.\ noise  with zero mean and variance $\sigma^2$.
\end{itemize}
Our goal is to recover $x$ from $t$ observations of the form 
\begin{equation} \label{eq:mra_observations}
	y_i =g_i\cdot x + \varepsilon_i, \quad i=1,\ldots,t,
\end{equation}
while the corresponding group elements $g_1,\ldots,g_t$ are unknown. 
\begin{figure}[H]
    \centering
    \begin{minipage}{0.3\columnwidth}
        \centering
        \includegraphics[width=\columnwidth]{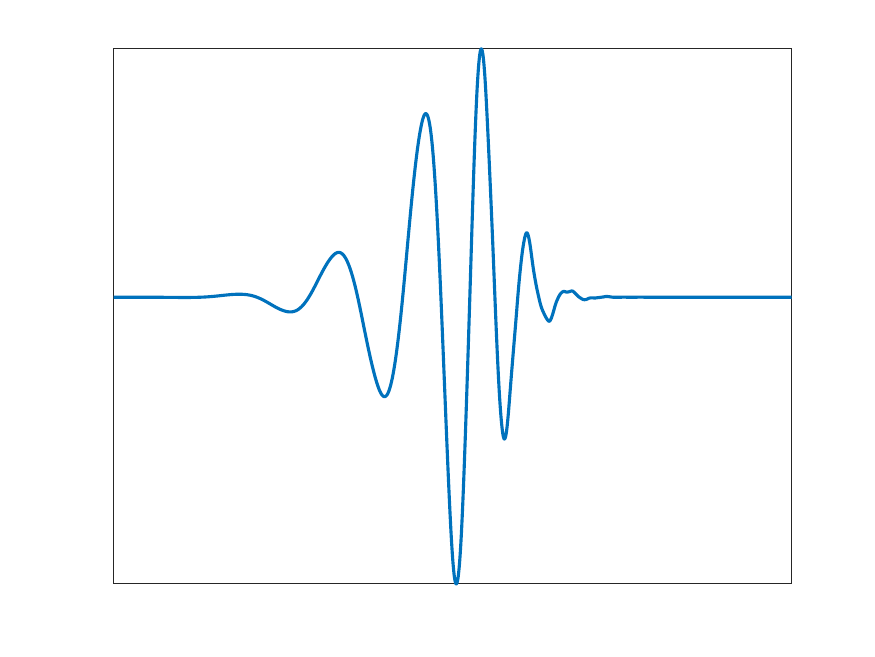}
        
    \end{minipage}
    \hfill
    \begin{minipage}{0.3\columnwidth}
        \centering
        \includegraphics[width=\columnwidth]{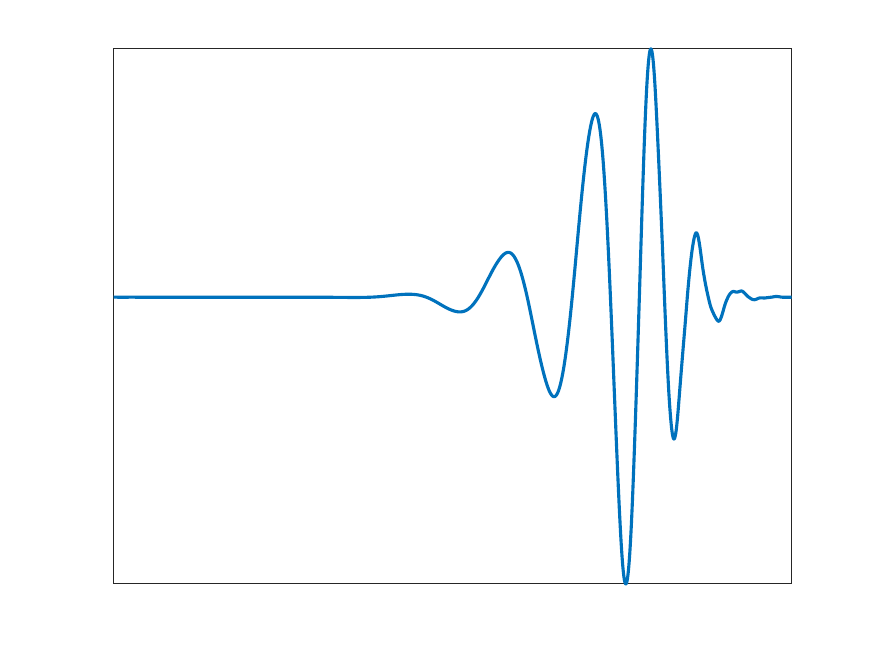}
        
    \end{minipage}
    \hfill
    \begin{minipage}{0.3\columnwidth}
        \centering
        \includegraphics[width=\columnwidth]{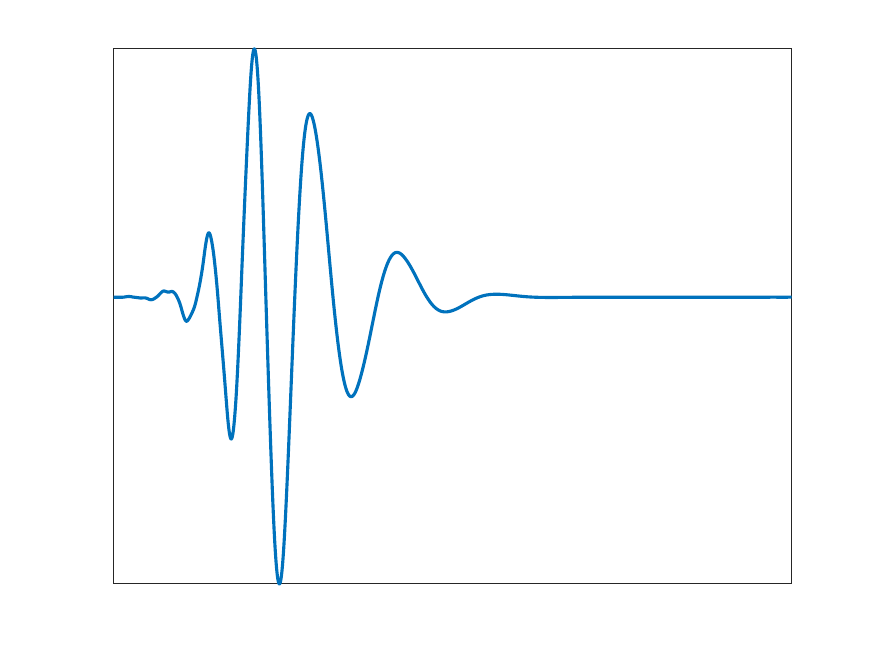}
        
    \end{minipage}
    
	\caption{\label{fig:example} An example of the action of the dihedral group. From left to right: the original signal, the signal after a circular shift, and the signal after a shift followed by reflection. The MRA problem~\eqref{eq:mra} entails estimating a signal, up to a global circular shift and reflection, from multiple noisy {copies of the signal}  acted upon by random elements of the dihedral group.}
\end{figure}

Because of its connection to cryo-EM, the MRA problem has been extensively
studied for various groups and representations~\cite{bandeira2014multireference,bendory2017bispectrum,bandeira2023estimation,abbe2017sample,abbe2018multireference,boumal2018heterogeneous,abbe2018estimation,perry2019sample,ma2019heterogeneous,bandeira2020optimal,bandeira2020non,romanov2021multi,abas2022generalized,hirn2019wavelet,aizenbud2021rank,katsevich2020likelihood,fan2020likelihood,ghosh2021multi,gao2019iterative,brunel2019learning}.
When the signal-to-noise ratio is extremely low, as is the case for cryo-EM measurements, there is no way to estimate the unknown group elements, but it can be shown that the moment tensors of the unknown signal can be accurately approximated with enough samples~\cite{abbe2018estimation}. This reduces the MRA problem
to the problem of recovering an orbit from its moment tensors. However, the sample complexity (the minimal number of measurements necessary for accurate approximation) grows
exponentially in the degrees of the moments, so to efficiently solve the MRA problem it is necessary to recover almost all signals from invariants of the lowest possible degree. 
A number of results on the sample complexity of MRA problems
involving cyclic and rotation groups $SO(2), SO(3)$ have previously
appeared in the literature~\cite{bandeira2020non,bandeira2023estimation, ma2019heterogeneous, janco2022accelerated,edidin2024orbit}. 

The problem of orbit recovery from invariant polynomials of low degree has also been studied in the context of equivariant machine learning. In addition to the rotation groups, researchers in this domain consider the permutation action of the symmetric group $S_n$ on $n \times d$ matrices, since this representation naturally arises in deep learning~\cite{balan2022permutation,segol2020universal, tabaghi2023universal, dym2022low,cohen2016group,kondor2018clebschgordan,blumsmith2023machine}. In~\cite{edidin2025orbit} 
the authors of this work gave numerical evidence that the third order invariant of the symmetric group action on the space of $n\times d$ matrices is sufficient to recover the orbit of a generic matrix when $d \gtrapprox \sqrt{n}$. 

In MRA models where the random group elements are drawn from a uniform distribution,
the coefficients of the moment tensors are invariant
functions on the representation. In other models,
the distribution of random elements is assumed to follow a `generic'
but unknown distribution. In this case the moments can be viewed as giving
invariants on $L^2(G) \times V$~\cite{bendory2022dihedral}. 
In particular,
the sample complexity of cyclic MRA for the uniform distribution is known
to be $\omega(\sigma^6)$~\cite{bandeira2020optimal}. In~\cite{abbe2018multireference} it was proved that for aperiodic distributions on the cyclic group $\Z_n$
the sample complexity of MRA is $\omega(\sigma^4)$. Likewise for the action
of the $2n$-element dihedral group $D_{n}$ on the subregular representation
$\C^n$ by cyclic rotations and reflections the sample
complexity is known to be $\omega(\sigma^4)$ if the probability distribution is assumed
to be generic~\cite{bendory2022dihedral}. In \cite{bendory2022dihedral} the authors ask
if the sample complexity for the uniform distribution is also $\omega(\sigma^6)$ for this representation of the dihedral group. Our work answers this question affirmatively.
Precisely, we have the following corollary of Theorem~\ref{thm.dihedral}.
\begin{corollary} \label{cor.dn-samplecomplexity}
Consider the multi-reference alignment model~\eqref{eq:mra} 
where the $g_i$ are drawn uniformly from  $D_n$ 
and $x \in \C^n$ is a generic vector. Then the minimal number of observations $N$ required for accurate recovery of $x$ is $N/\sigma^6\to\infty$.
\end{corollary}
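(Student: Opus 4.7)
The plan is to derive the corollary by combining Theorem~\ref{thm.dihedral} with the general sample complexity framework for MRA under a uniform group action. The guiding principle, established for cyclic MRA in~\cite{bandeira2020optimal, abbe2018estimation} and which extends to any finite group with essentially no change, is that if $d$ is the smallest integer for which the moment tensors of $y$ of degree at most $d$ separate generic $G$-orbits, then the minimax estimation rate for $x$ modulo its $G$-orbit is of order $(\sigma^{2d}/N)^{1/2}$. Consequently, consistent recovery is possible if and only if $N/\sigma^{2d}\to\infty$. Given this framework, the proof reduces to pinpointing $d = 3$ for the action of $D_n$ on $\C^n$ under the uniform distribution.

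The upper bound $d\le 3$ is exactly the content of Theorem~\ref{thm.dihedral}: generic orbits are separated by invariants of degree at most three, so a bispectrum-style estimator succeeds once $N/\sigma^6 \to \infty$. The matching lower bound $d\ge 3$ requires verifying that the degree-$\le 2$ invariants do not suffice to separate generic $D_n$-orbits. The first moment $\mathbb{E}[y]$ contributes only the $D_n$-invariant component of $x$, which is trivial for generic $x$. For the second moment, averaging $(g\cdot x)(g\cdot x)^{*}$ over $g\in D_n$ and passing to the Fourier basis yields the data $\{|\hat{x}_k|^2 + |\hat{x}_{-k}|^2\}_k$, i.e., the power spectrum symmetrized by reflection. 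It is classical that the cyclic power spectrum fails to separate generic $\Z_n$-orbits on $\C^n$, and since each $D_n$-orbit is generically the disjoint union of two $\Z_n$-orbits, the same obstruction persists: one can exhibit generic signals $x, y$ with identical second moments lying in distinct $D_n$-orbits.

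Combining these ingredients yields $N/\sigma^6 \to \infty$ as the rate for consistent recovery. The main obstacle is entirely absorbed into the proof of Theorem~\ref{thm.dihedral}; the corollary itself is a routine translation into the sample complexity language, modulo the short verification that degree 2 invariants are insufficient. The one place that warrants care is checking that the finite-group minimax machinery of~\cite{bandeira2020optimal} carries over verbatim from $\Z_n$ to $D_n$, but this is standard since only the structure of invariant tensors and the Gaussian noise model enter, neither of which is sensitive to the precise group.
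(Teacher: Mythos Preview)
Your argument is correct and aligns with the paper's approach: the paper gives no separate proof of the corollary, stating it simply as a consequence of Theorem~\ref{thm.dihedral} via the sample-complexity framework of~\cite{bandeira2020optimal}. You in fact supply more than the paper does by explicitly checking the lower bound $d\ge 3$; one small slip is the phrase ``trivial for generic $x$''---the $D_n$-invariant component is the (generically nonzero) mean $f[0]$---but this does not affect the conclusion, since degree~$\le 2$ invariants still lose all phase information.
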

\rev{\begin{remark}
For other representations of finite groups a number of results analogous to Corollary~\ref{cor.dn-samplecomplexity} are
proved in the literature. In particular in~\cite{bandeira2023estimation} it is proved that invariants of degree at most three separate generic orbits in the regular representation of a finite abelian group. The authors of this paper extended this to the regular representation of an arbitrary finite group in~\cite{edidin2025orbit} and in the same paper gave an example 
of a representation of $D_n$ which contains a copy of $\C^n$ but for which invariants of degree at most cannot separate
generic orbits.\end{remark}}
\rev{\begin{remark} For the case of the cyclic MRA problem the result of~\cite{abbe2018multireference} shows that
aperiodicity of the distribution is necessary and sufficient for $\omega(\sigma^4)$ sample complexity. By constrast, for dihedral MRA we do not have an explicit condition which differentiates between $\omega(\sigma^4)$ sample complexity seen
for a generic distribution and the $\omega(\sigma^6)$ sample complexity seen for the uniform distribution.\end{remark}}

In addition to the theoretical question of sample complexity, researchers have also explored the algorithmic challenges inherent in signal recovery. Significant progress has been made for the cyclic MRA model. It is well known that for the cyclic third moment, or its Fourier equivalent, the bispectrum determines the orbit of a generic signal~\cite{sadler1989shift}. In~\cite{bendory2017bispectrum}  an efficient frequency marching algorithm for reconstruction was given which takes advantage of the fact the bispectrum consists of monomials.

We establish that for the dihedral model, the third moment still determines the orbit of a generic signal, despite the fact that the frequency marching strategy becomes combinatorially infeasible. This discrepancy arises due to fundamental algebraic differences: the cyclic group's abelian structure allows for the diagonalization of the third moment tensor when computed in the Fourier basis (the bispectrum), transforming a complicated polynomial system into a tractable system of monomial equations. By contrast the dihedral group's non-abelian nature prevents such diagonalization, leaving no simplification to a purely monomial system. We will see that when we compute the third dihedral moment with respect to the Fourier basis we obtain a tensor which can be viewed as a reflection invariant analogue of the classical bispectrum.

To address these challenges, we investigate a direct optimization approach, attempting signal recovery by minimizing a polynomial loss function measuring the discrepancy between the true and computed dihedral third moments. In the cyclic model, this was shown to converge to the correct signal with random initialization in~\cite{bendory2017bispectrum}. We adapt the MATLAB code from~\cite{bendory2017bispectrum} and we show that as in the case of cyclic MRA, the dihedral optimization algorithm converges to the correct signal with random initialization. As the signal length increases, the error of the estimates is indistinguishable in the cyclic and dihedral models, although it remains higher for the dihedral case than for the cyclic case at small signal lengths. These optimization-based numerical experiments highlight the feasibility and robustness of the proposed approach, reinforcing its practical applicability. \rev{The code to reproduce all of our experiments is publicly available at \url{https://github.com/josh1katz/Reflection-invariant-bispectrum}.}

This work underscores the importance of extending MRA methods to non-abelian groups. By doing so, we not only broaden the scope of applicability but also move closer to realistic models encountered in critical fields like cryo-EM. Our findings provide strong motivation for further theoretical and computational exploration of MRA problems associated with other non-abelian groups.

\section{Statement of results}
\subsection{Invariant tensors} \label{sec.moments}
\begin{definition} \label{def.inv}
  Let $V$ be a complex representation of a compact group
  and let $x \in V$ be any vector. The symmetric tensor
  \begin{align}
  T^G_d(x)=%{1\over{|G|}}
  \int_G (g \cdot x)^{\bigotimes d}\;dg
  \end{align}
  is the degree $d$ {\em polynomial invariant tensor} and
  \begin{align}
M^G_d(x)=\int_G (g \cdot x)^{\bigotimes d-1}\otimes(\overline{g \cdot x})\;dg
  \end{align} is the degree $d$ {\em unitary invariant tensor}.
  (In both cases the integral is with respect to the Haar measure.)
\end{definition}

%\josh{second one should be integral}

The polynomial and unitary invariant tensors are the moment tensors for random
group translates of a vector $x \in V$ when the group
elements are taken with respect to a uniform distribution on $G$. For this reason we will also refer to them as moments.
\begin{remark} By definition the polynomial and unitary invariants agree on real vectors. We will take advantage of this fact in the proof of Theorem~\ref{thm.dihedral}.
\end{remark}
\rev{\begin{remark} Throughout this paper we use the term {\em generic} in the sense of algebraic geometry. In particular
we say that a property holds for generic vectors in $\R^n$ (resp. $\C^n$) if there is an open set $U$
whose complement is an algebraic subset of dimension $m < n$ such that the property holds on $U$. Such open
sets are called Zariski open sets and they are dense and have full Lesbegue measure.
\end{remark}
}
\begin{definition} If $V$ is a $n$-dimensional complex representation of a compact group
  then we say that polynomial invariants of degree at most $d$ separate generic
  orbits if there is a non-empty $G$-invariant  Zariski open set $U\subset V$ such that for $x \in U$ the orbit of $x$ is uniquely determined by the invariant
  tensors $T^G_1(x), \ldots ,  T^G_d(x)$. Analogously we say that unitary invariants of degree at most $d$ separate generic orbits if there is a non-empty real Zariski open
  set in $U \subset \C^n = \R^{2n}$ such that for all $x \in U$ the tensors $M^G_1(x), \ldots , M^G_d(x)$ uniquely determine the orbit of $x$.
\end{definition}
\begin{remark}  The condition that polynomial invariants of degree $d$ separate generic orbits implies
that $\Frac (\C[V]^G) = \Frac \C[V^G_{\leq d}]$ where $V_{\leq d}^G \subset \C[V]^G$ is the finite dimensional subspace of invariant polynomials of degree at most $d$.  The converse holds provided that $\Frac \C[V]^G = \C(V)^G$
which is true if $G$ is finite by~\cite[Theorem 3.3]{popov1994invariant}. Note that for a general compact group the invariant ring $\C[V]^G$ is the same as the invariant ring of the corresponding complex algebraic group $G_\C$. 
\end{remark}

\subsection{The standard representation of the dihedral group}
Let $D_{n}$ be the dihedral group of order $2n$ with generators
$r$ of order $n$ and $s$ of order two. Consider the $n$-dimensional
standard representation of $D_{n}$ where the generator $r$ acts by cyclic shifts and the generator $s$ acts by the reflection
$s(x_0, \ldots x_{n-1}) = (x_0, x_{n-1}, x_{n-2}, \ldots x_1)$.

\begin{theorem} \label{thm.dihedral}
  The polynomial invariant tensors $T^{D_n}_1, T^{D_n}_2, T^{D_n}_3$  separate generic complex orbits
  in the standard representation of $D_{n}$.
\end{theorem}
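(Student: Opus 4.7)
My plan is to pass to the Fourier basis, extract from the degree-$\leq 3$ dihedral invariants the unordered pair consisting of the cyclic bispectrum entry and its reflection, and then reduce orbit separation to the constancy of a $\{0,1\}$-valued function on pairs of frequencies. Setting $X_k := \hat{x}_k$ for $k \in \mathbb{Z}/n\mathbb{Z}$, I write $A_{k_1,k_2} := X_{k_1}X_{k_2}X_{-k_1-k_2}$ and $B_{k_1,k_2} := X_{-k_1}X_{-k_2}X_{k_1+k_2} = s \cdot A_{k_1,k_2}$. The space of degree-$\leq 3$ polynomial $D_n$-invariants is spanned by $X_0$, the quadratic invariants $p_k := X_kX_{-k}$, and the cubic invariants $S_{k_1,k_2} := A_{k_1,k_2} + B_{k_1,k_2}$. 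The crucial identity is $A_{k_1,k_2}B_{k_1,k_2} = p_{k_1}p_{k_2}p_{k_1+k_2}$, so the $D_n$-invariants determine the unordered pair $\{A_{k_1,k_2},B_{k_1,k_2}\}$ as the roots of an explicit quadratic whose coefficients are degree-$\leq 3$ invariants.

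Now suppose $x$ and $y$ yield identical degree-$\leq 3$ dihedral invariants; I aim to show $y \in D_n \cdot x$ for $x$ in a non-empty Zariski-open subset of $V$. Writing $Y_k = r_k X_k$, the equality of all $p_k$ gives $r_k r_{-k} = 1$, and the equality of the unordered pairs supplies, for each $(k_1,k_2)$, a sign $\epsilon_{k_1,k_2} \in \{0,1\}$ with
\[
r_{k_1}r_{k_2}r_{-k_1-k_2} = \beta_{k_1,k_2}^{\epsilon_{k_1,k_2}}, \qquad \beta_{k_1,k_2} := B_{k_1,k_2}/A_{k_1,k_2} = \gamma_{k_1}\gamma_{k_2}\gamma_{-k_1-k_2},
\]
where $\gamma_k := X_{-k}/X_k$. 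If $\epsilon \equiv 0$, the constraints on $(r_k)$ are exactly the cyclic third-moment equations, so by the classical bispectrum theorem \cite{sadler1989shift,bendory2017bispectrum} we obtain $r_k = \omega^{kt}$ and $y$ lies in the cyclic orbit of $x$; if $\epsilon \equiv 1$, the rescaled variables $r_k' := \gamma_k^{-1} r_k$ satisfy the same cyclic constraints and place $y$ in the cyclic orbit of $s \cdot x$. In both cases $y \in D_n \cdot x$, so the theorem reduces to showing that $\epsilon$ must be constant for generic $x$.

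The constancy of $\epsilon$ is the main obstacle, and I expect to establish it via a multiplicative cocycle identity. Since $r_{k_1}r_{k_2}r_{k_3}r_{-k_1-k_2-k_3}$ is symmetric in $(k_1,k_2,k_3)$ and $r_{k_1+k_2}r_{-k_1-k_2} = 1$, setting $\rho_{k_1,k_2} := r_{k_1}r_{k_2}r_{-k_1-k_2}$ gives
\[
\rho_{k_1,k_2}\rho_{k_1+k_2,k_3} = \rho_{k_1,k_3}\rho_{k_1+k_3,k_2},
\]
and the identical relation holds with $\beta$ in place of $\rho$. Substituting $\rho = \beta^{\epsilon}$ and dividing by the $\beta$-cocycle produces a Laurent monomial relation
\[
\beta_{k_1,k_3}^{\epsilon_{k_1,k_2}-\epsilon_{k_1,k_3}}\,\beta_{k_1+k_3,k_2}^{\epsilon_{k_1,k_2}-\epsilon_{k_1+k_3,k_2}}\,\beta_{k_1+k_2,k_3}^{\epsilon_{k_1+k_2,k_3}-\epsilon_{k_1,k_2}} = 1.
\]
Expanding each $\beta$ as a Laurent monomial in the generically algebraically independent $\gamma_1,\dots,\gamma_{\lfloor n/2 \rfloor}$, the exponent of each $\gamma$ must vanish, forcing $\epsilon_{k_1,k_2} = \epsilon_{k_1,k_3} = \epsilon_{k_1+k_3,k_2} = \epsilon_{k_1+k_2,k_3}$ for generic triples $(k_1,k_2,k_3)$; varying indices and using $\epsilon_{k_1,k_2} = \epsilon_{k_2,k_1}$ then forces $\epsilon$ to be globally constant. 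The delicate points in executing this step are (i) exhibiting an explicit non-empty Zariski open subset of $V$ on which the required monomial independence and the non-degeneracy $A_{k_1,k_2} \neq B_{k_1,k_2}$ both hold, and (ii) harmlessly absorbing the finitely many symmetric pairs — such as $(k,-k)$, or $(n/2, n/2)$ when $n$ is even — on which $A = B$ and $\epsilon$ is not well-defined.
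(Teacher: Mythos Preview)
Your approach is quite different from the paper's. The paper first proves real orbit separation by passing to the $O(2)$-action on $W_k$ and establishing a combinatorial ``excessive spanning set'' lemma (Proposition~\ref{prop.annihilator}, Lemma~\ref{lem.hyperplane}) guaranteeing a dependence $\sum m_{ij}(t_i+t_j-t_{i+j})=0$ with all $m_{ij}$ nonzero; it then bootstraps to complex orbits by a density argument (Proposition~\ref{prop.realtest}), showing that any complex vector sharing the invariants of a generic real one is itself real. You instead work directly over $\C$, extract the unordered pair $\{A_{k_1,k_2},B_{k_1,k_2}\}$ via the clean identity $A\cdot B=p_{k_1}p_{k_2}p_{k_1+k_2}$, and reduce everything to constancy of a $\{0,1\}$-valued $\epsilon$ through a multiplicative cocycle. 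This framing is attractive and sidesteps the real-to-complex bootstrap entirely.

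The cocycle step, however, has a genuine gap that is not merely a matter of bookkeeping. Your derived three-term identity is a Laurent relation among coboundaries $\beta_{ij}=\gamma_i\gamma_j\gamma_{i+j}^{-1}$, and the $\beta$'s already satisfy many such relations; vanishing of the $\gamma$-exponents forces the $\epsilon$-differences to zero only when the six indices $k_1,k_2,k_3,k_1{+}k_2,k_1{+}k_3,k_1{+}k_2{+}k_3$ map to \emph{distinct} free $\gamma$-variables, and in $\Z/n\Z$ with $\gamma_{-k}=\gamma_k^{-1}$ this is a serious constraint. For $n=5$ the argument collapses: there are only two essential values $\epsilon_{\{1,1,3\}}$ and $\epsilon_{\{1,2,2\}}$, and every instantiation of the four-term cocycle is tautological (each side reduces to a product $\beta\cdot\beta^{-1}$), so no relation between the two $\epsilon$'s is obtained. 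Setting $\epsilon_{\{1,1,3\}}=0$, $\epsilon_{\{1,2,2\}}=1$ and solving $r_2=r_1^{2}$, $r_1^{5}=\gamma_1\gamma_2^{2}$ produces vectors $y$ sharing all degree-$\le 3$ invariants of a generic $x$ while lying outside $D_5\cdot x$; concretely, $X=(0,1,1,2,3)$ and $Y=(0,12^{1/5},12^{2/5},2\cdot 12^{-2/5},3\cdot 12^{-1/5})$ have identical $p_1,p_2,q_1,q_2$. So ``varying indices forces $\epsilon$ globally constant'' is not a consequence of the cocycle identity alone, and your proposal does not yet supply the combinatorial substitute that the paper provides through its excessive-spanning-set machinery and its separate small-$n$ analyses.
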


\subsection{Outline of the proof}
The proof of Theorem~\ref{thm.dihedral} proceeds in two steps. We
start by proving that polynomial invariants of degree three separate
\rev{generic} real orbits from each other, Proposition~\ref{prop.real_sep}, and then
use algebro-geometric techniques to bootstrap from real to complex
orbit separation.  To prove Proposition~\ref{prop.real_sep} we work in
the Fourier domain and take advantage of the fact the corresponding
invariants for real vectors are related to the classical power
spectrum and bispectrum which arise signal processing~\cite{tukey1953spectral}.  In the Fourier
domain the standard representation of $D_n$ is the restriction of a
band-limited representation of the orthogonal group $O(2)$, which we
view as the `continuous' dihedral group. We then prove
Theorem~\ref{thm.orthogonal}, a result of independent interest,
stating that for this representation the $O(2)$-invariants of degree
at most three separate generic real orbits. The proof
Proposition~\ref{prop.real_sep} is completed by proving that if two
vectors in an $O(2)$-orbit have the same dihedral invariants then they
lie in a common $D_n$ orbit. 

In Section~\ref{sec.dihedralproofcomplex}
we complete the proof of Theorem~\ref{thm.dihedral} by showing that no non-real vector can have the
same polynomial invariants as a generic real vector. This combined
with the fact that $\R^n$ is Zariski dense in $\C^n$ with respect to
the complex Zariski topology allows us to conclude the proof.
\rev{\begin{remark}The strategy used in Section~\ref{sec.dihedralproof} takes advantage of the fact that the standard representation
of the dihedral group $D_{n}$ extends to a representation of the orthogonal group $O(2)$. In general suppose
$H < G$ and $V$ is a representation of $G$ and thus of $H$. Any $G$-invariant polynomial on $V$
is necessarily $H$ invariant, and any $G$-orbit decomposes into a union of $H$-orbits. 
If we prove that $G$-invariant polynomials of degree at most $d$ separate generic $G$-orbits
then we can prove that $H$-invariant polynomials of degree
at most $d$ separate $H$-orbits if we can find further $H$-invariants of degree at most $d$ which separate a generic $G$-orbit into its $H$-suborbits. This strategy was successful in our case when $G = O(2)$ and $H=D_{n}$, in part because the irreducible
summands in the $O(2)$ representation remain irreducible when we restrict to $D_n$. We are not aware if this strategy
has been previously used in the literature.
\end{remark}}

\section{Proof of theorem~\ref{thm.dihedral} for real vectors} \label{sec.dihedralproof}
The goal of this section is to prove the following proposition.
\begin{proposition} \label{prop.real_sep}
  There is a real Zariski open set $U \subset \R^n \subset \C^n$ such that for
  all $x \in U$ the following condition holds. If  $x' \in \R^n$
  and $T^{D_n}_d (x) = T^{D_n}_d (x')$ for $d = 1,2,3$ then $x' = gx$
  for some $g \in D_n$.
\end{proposition}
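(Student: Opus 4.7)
The plan is to move to Fourier coordinates and then bootstrap from orbit separation for $O(2)$ down to orbit separation for $D_n$. First, apply the DFT. By the remark that polynomial and unitary invariants coincide on real vectors, the moments $T^{D_n}_d(x)$ for $d \leq 3$ encode the constant Fourier coefficient $\hat{x}_0$, the power spectrum $\{|\hat{x}_k|^2\}$, and a reflection-symmetrized dihedral bispectrum formed from monomials $\hat{x}_{k_1}\hat{x}_{k_2}\hat{x}_{k_3}$ over index triples satisfying $k_1+k_2+k_3 \equiv 0 \pmod n$, then averaged under $k_i \mapsto -k_i$.

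Next, identify $\R^n$ with a real band-limited representation $V$ of $O(2)$ (trigonometric polynomials of bounded frequency, sampled at the $n$-th roots of unity), so that $D_n \subset O(2)$ acts on $V$ in the standard way. Because $D_n$ is a subgroup of $O(2)$, every $O(2)$-invariant polynomial is automatically $D_n$-invariant; moreover, a standard averaging argument shows that every $O(2)$-invariant polynomial of degree at most three can be written as a linear combination of the coordinates of $T^{D_n}_1(x), T^{D_n}_2(x), T^{D_n}_3(x)$. Hence equality of the dihedral moments of $x$ and $x'$ forces equality of all $O(2)$-invariants of degree at most three, and Theorem~\ref{thm.orthogonal} yields a real Zariski open set $U_0 \subset \R^n$ such that for every $x \in U_0$ one has $x' = g \cdot x$ for some $g \in O(2)$.

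To descend from $O(2)$ to $D_n$ I would exploit the wraparound bispectrum entries: those indexed by triples $(k_1, k_2, k_3)$ with $k_1+k_2+k_3 = \pm n$, i.e., triples whose sum vanishes modulo $n$ but not as an integer. Under a rotation $g = R_\theta \in SO(2)$ the monomial $\hat{x}_{k_1}\hat{x}_{k_2}\hat{x}_{k_3}$ transforms by the scalar $e^{\mp in\theta}$, while balanced triples ($k_1+k_2+k_3 = 0$) are $O(2)$-invariant and give no information about $g$. Enforcing $T^{D_n}_3(g \cdot x) = T^{D_n}_3(x)$ on a wraparound entry that is nonzero at $x$ forces $e^{in\theta}=1$, hence $\theta \in (2\pi/n)\Z$, placing $R_\theta$ in the cyclic subgroup of $D_n$; the reflected case $g = R_\theta s$ reduces to this one since $T^{D_n}_3$ is already $s$-invariant. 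The set $U \subset U_0$ in the proposition is then defined by requiring at least one wraparound entry to be nonzero at $x$.

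The main obstacle is this descent step: one must identify the correct wraparound monomials, track their transformation under arbitrary $g \in O(2)$, and verify the mild genericity condition that some wraparound entry does not vanish identically as a polynomial on $\R^n$. The index bookkeeping depends on the parity of $n$, but in either case a direct check produces wraparound triples with $|k_i| \leq (n-1)/2$, so the genericity condition cuts out a non-empty Zariski open subset of $\R^n$.
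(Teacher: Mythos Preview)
Your plan is essentially the paper's own: pass to the Fourier domain, observe that the $O(2)$-invariants of degree at most three sit inside the $D_n$-invariants, invoke Theorem~\ref{thm.orthogonal} to put $x'$ in the $O(2)$-orbit of $x$, and then use a wraparound triple $k_1+k_2+k_3=\pm n$ to force the rotation angle into $(2\pi/n)\Z$. The reflection reduction you describe is exactly how the paper handles the two $SO(2)$-components of the $O(2)$-orbit.

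There is one genuine gap. Theorem~\ref{thm.orthogonal} explicitly excludes $k=3$ (see Example~\ref{k=3}: for $W_3$ the $O(2)$-invariants of degree at most three only determine the orbit up to a list of size two). Hence your argument, as written, says nothing about $n=7$ or $n=8$. The paper closes this gap with a separate proposition that bypasses Theorem~\ref{thm.orthogonal} entirely: it works directly with the $D_7$ (resp.\ $D_8$) third moment, using the mod-$n$ index relations (e.g.\ $a_{1,2}\,a_{-2,-2}^{2}\,a_{1,3}=a_{1,1}$ in $\Z/7\Z$) to show that all the $a_{i,j}(f')$ are forced to agree with $a_{i,j}(f)$ simultaneously, not just up to conjugation. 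Your sketch would need either this argument or some other replacement for Theorem~\ref{thm.orthogonal} when $k=3$.

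A smaller omission concerns even $n$. The decomposition is $L_0\oplus W_k\oplus L_{-1}$, and the Fourier coefficient $h[-1]$ at frequency $n/2$ is not captured by the $O(2)$ argument on $W_k$ or by the wraparound triples you describe. The paper recovers it at the end from the specific degree-three invariant $h[-1]\bigl(f[1]f[n/2-1]+f[n-1]f[n/2+1]\bigr)$; you allude to parity bookkeeping but do not say how this coefficient is pinned down.
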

To simplify many of the computations we work in the Fourier basis
and identify
$\mathbb{R}^n$ with its image in $\mathbb{C}^n$ under the discrete Fourier transform. The image of $\R^n$ under the discrete Fourier transform is
the set of vectors $(f[0], \ldots , f[n-1]) \in\C^n$ satisfying
the condition $f[i] = \overline{f[n-i]}$. In particular we will identify 
a vector $x \in \R^n$ with its Fourier transform
$f= (f[0], f[1], \ldots , f[n-1]) \in \C^n$.

Let $r$ and $s$ be the rotation and reflection which generate $D_{n}$. In the Fourier basis the action of $r$ can be diagonalized 
\begin{align}
    r(f[\ell])=e^{\frac{2\pi \iota \ell}{n}}f[l]
\end{align}
and the reflection action is given by 
\begin{align} 
     s(f[\ell])=f[n-\ell]=\overline{f{[\ell]}}.
\end{align}
The standard representation $\C^n$ decomposes as a sum
$L_0 + V_1 \ldots + V_{(n-1)/2}$
if $n$ is odd
and
$L_0 + V_1 \ldots + V_{n/2 -1} +L_{-1}$ if $n$ is even.
Here $L_0$ is the trivial
representation, $L_{-1}$ the character where the rotation acts with weight $-1$ and reflection trivially. The representation $V_\ell$ is the two-dimensional representation
of $D_n$ where $r$ acts with weights $e^{\pm 2\pi \iota \ell/n}$ and
the reflection exchanges the eigenspaces.

The irreducible two-dimensional dihedral representation $V_\ell$ is the restriction of an irreducible two-dimensional $O(2)$ representation where the rotation $e^{\iota \theta} $ acts with weights $e^{\pm  \iota \ell \theta}$ and
the reflection exchanges the eigenspaces. Let $W_{k} = 
\oplus_{\ell =1}^k V_\ell$.
Before addressing the dihedral case we analyze the $S^1 =SO(2)$ and $O(2)$ orbit recovery problems for
(the Fourier transform of) real vectors in the representation $W_k$.

\subsection{$SO(2)$ and $O(2)$-invariants}
\begin{theorem} \label{thm.orthogonal}
Let $W_k = \oplus_{\ell =1}^k V_k$ be the $2k$-dimensional real representation of $O(2)$.
\begin{enumerate}
\item The invariants of degree two and three for the restricted action of $SO(2)$ separate generic real orbits in $W_k$.
\item If $k \neq 3$ the invariants of degree two and three \rev{for the action of $O(2)$} separate
generic real orbits in $W_k$.
\end{enumerate}
\begin{remark} Part (1) of Theorem~\ref{thm.orthogonal} is well-known in the signal processing community and this result is usually refered to as the statement that the bispectrum recovers orbits of band-limited functions - see for example~\cite{ma2019heterogeneous}. However part (2) for the orthogonal group $O(2)$
  appears to be new.
\end{remark}  
\end{theorem}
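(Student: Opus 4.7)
The plan is to work in Fourier coordinates and identify a real vector in $W_k$ with $(f[1], \ldots, f[k]) \in \C^k$, where $f[\ell]$ is the coordinate on the $+\ell$-weight line of $V_\ell$ and the $-\ell$-line is filled by $\overline{f[\ell]}$. Then $SO(2)$ acts by $f[\ell] \mapsto e^{\iota \ell \theta} f[\ell]$ and the reflection $s$ by $f[\ell] \mapsto \overline{f[\ell]}$. The degree-two polynomial invariants are spanned by the magnitudes $|f[\ell]|^2$, which are already $O(2)$-invariant, and the degree-three $SO(2)$-invariants are spanned by the bispectral coefficients
\[
B(\ell_1,\ell_2) := f[\ell_1]\, f[\ell_2]\, \overline{f[\ell_1+\ell_2]}, \qquad 1 \le \ell_1 \le \ell_2,\ \ell_1 + \ell_2 \le k.
\]
For part (1), on the Zariski open set where every $f[\ell]$ is nonzero I apply classical frequency marching: normalize $\arg f[1] = 0$ using $SO(2)$, then from $B(1,\ell-1) = f[1]\, f[\ell-1]\, \overline{f[\ell]}$ recover $\arg f[\ell]$ from $\arg f[\ell-1]$ by induction on $\ell$.

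For part (2), since $s$ sends $B(\ell_1,\ell_2) \mapsto \overline{B(\ell_1,\ell_2)}$, the degree-three $O(2)$-invariants are spanned by the real parts $\Re B(\ell_1,\ell_2)$. Combined with the magnitudes, which determine $|B(\ell_1,\ell_2)|^2 = |f[\ell_1]|^2 |f[\ell_2]|^2 |f[\ell_1+\ell_2]|^2$, the $O(2)$-invariants determine each $B(\ell_1,\ell_2)$ up to complex conjugation. Writing $f[\ell] = r_\ell e^{\iota \phi_\ell}$ and $\theta_{\ell_1,\ell_2} := \phi_{\ell_1} + \phi_{\ell_2} - \phi_{\ell_1+\ell_2}$, each angle $\theta_{\ell_1,\ell_2}$ is thus known up to sign. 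Normalizing $\phi_1 = 0$, the $O(2)$-orbit of $(\phi_2,\ldots,\phi_k)$ consists precisely of $\pm(\phi_2,\ldots,\phi_k)$, so it suffices to show that any candidate phase vector $(\phi'_2,\ldots,\phi'_k)$ producing the same $O(2)$-invariants satisfies $\phi'_\ell = \pm \phi_\ell$ \emph{simultaneously} for all $\ell$. Introducing signs $\epsilon_\ell \in \{\pm 1\}$ with $\theta'_{1,\ell} = \epsilon_\ell \theta_{1,\ell}$ for $\ell = 1,\ldots,k-1$, a direct telescoping expresses each $\phi'_\ell$ as a signed combination of $\phi_2,\ldots,\phi_\ell$, and the task reduces to showing $\epsilon_1 = \epsilon_2 = \cdots = \epsilon_{k-1}$.

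The crux of the argument is the telescoping identity
\[
\theta_{2,\ell_2} = -\theta_{1,1} + \theta_{1,\ell_2} + \theta_{1,\ell_2+1}, \qquad 2 \le \ell_2 \le k-2,
\]
which provides an admissible $O(2)$-invariant exactly when $k \ge 4$. Applying it to both the true and candidate angles and writing $\theta'_{2,\ell_2} = \eta_{\ell_2} \theta_{2,\ell_2}$ yields
\[
\eta_{\ell_2}\bigl(-\theta_{1,1} + \theta_{1,\ell_2} + \theta_{1,\ell_2+1}\bigr) \equiv -\epsilon_1 \theta_{1,1} + \epsilon_{\ell_2} \theta_{1,\ell_2} + \epsilon_{\ell_2+1} \theta_{1,\ell_2+1} \pmod{2\pi}.
\]
On a real Zariski open set of $(\phi_2,\ldots,\phi_k)$ where $\theta_{1,1}, \theta_{1,\ell_2}, \theta_{1,\ell_2+1}$ carry no nontrivial integer relation modulo $2\pi$, matching coefficients forces $\eta_{\ell_2} = \epsilon_1 = \epsilon_{\ell_2} = \epsilon_{\ell_2+1}$. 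Sliding $\ell_2$ from $2$ up to $k-2$ chains these equalities into $\epsilon_1 = \epsilon_2 = \cdots = \epsilon_{k-1}$, yielding the desired global sign flip. The cases $k = 1, 2$ are trivial since the sign ambiguity of $\theta_{1,1} = -\phi_2$ coincides with the $O(2)$-ambiguity; the case $k = 3$ fails because no admissible $\ell_2$ exists for the identity, so $\epsilon_1$ and $\epsilon_2$ remain genuinely independent and inequivalent orbits share identical $O(2)$-invariants of degree at most three.

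The main technical obstacle I anticipate is making precise the linear-independence-mod-$2\pi$ genericity. Specifically, I need to check that the finitely many exceptional loci (one for each spurious sign combination) are each cut out by a nontrivial real-algebraic equation in the complex phases $e^{\iota \phi_\ell} = f[\ell]/|f[\ell]|$, so that their union is contained in a proper real Zariski closed subset whose complement witnesses the theorem. This amounts to enumerating the finitely many relevant integer linear forms in the $\phi_\ell$ arising from non-global sign patterns and verifying that none of them is identically trivial, which I expect to be a routine but bookkeeping-heavy verification.
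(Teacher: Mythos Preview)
Your argument is correct and the overall architecture matches the paper's: work in Fourier coordinates, read off the power spectrum from the degree-two invariants, note that the degree-three $O(2)$-invariants determine each bispectral phase $\theta_{i,j}=\phi_i+\phi_j-\phi_{i+j}$ only up to sign, and then argue that the sign choices must be globally coherent. Where you diverge from the paper is in how you force coherence. The paper proves an abstract combinatorial fact (its Proposition~3.6): for $k\ge 4$ there exists a \emph{single} linear dependence $\sum_{i+j\le k} m_{i,j}(t_i+t_j-t_{i+j})=0$ with every $m_{i,j}$ nonzero, established via several lemmas about ``excessive spanning sets'' of the hyperplane annihilated by $(1,2,\dots,k)$. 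One application of this universal relation then rules out any proper nonempty set of flipped signs. You instead exploit the explicit family of four-term identities $\theta_{2,\ell_2}=-\theta_{1,1}+\theta_{1,\ell_2}+\theta_{1,\ell_2+1}$ for $2\le \ell_2\le k-2$, each of which under genericity pins $\epsilon_1=\epsilon_{\ell_2}=\epsilon_{\ell_2+1}$, and chain them. Your route is more elementary and avoids the spanning-set lemmas entirely; the paper's route is more structural and packages the genericity condition into a single statement (its condition~($*$)). Both yield the same Zariski-open locus up to inessential differences, and both correctly isolate $k=3$ as the case where the needed relations are unavailable. The bookkeeping you flag at the end---checking that each spurious sign pattern cuts out a nontrivial real-algebraic locus in the $f[\ell]$---is exactly parallel to the paper's Lemma~3.4 and goes through for the same reason: the forms $\theta_{1,1},\dots,\theta_{1,k-1}$ are linearly independent functions of $(\phi_2,\dots,\phi_k)$, so any nonzero integer combination of them gives a nontrivial equation on the Fourier coefficients.
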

To establish notation let $e_{-k}, \ldots , e_{k}$ be an orthonormal basis for $W_{k}$ in the Fourier domain
where
$e^{\iota \theta} \in SO(2)$ acts by $e^{\iota \theta} \cdot e_m = 
e^{\iota m \theta}e_m$. With respect to this basis the irreducible $O(2)$-representation $V_\ell$ is
spanned by $e_\ell, e_{-\ell}$. If we expand a vector
$f \in W_k$ in terms of this basis as
\begin{equation} \label{eq.aij}
f = \sum_{\ell =-k}^k f[\ell]e_{\ell}
\end{equation}
then the fact that $f$ is the Fourier transform of a real vector
implies that $f[-\ell] = \overline{f[\ell]}$.

\subsubsection{Proof for $SO(2)$-invariants}
With this notation we have
\begin{align}
T_2^{SO(2)}(f)=\sum_{i=1}^k {f}[i]{f}[-i]e_ie_{-i}=\sum_{i=1}^k|{f}[i]|^2e_ie_{-i}
\end{align} 

\begin{align}
  T_3^{SO(2)}(f)=\sum_{\{(i,j)| -k \leq i+ j \leq k\}}{f}[i]{f}[j]{f}[-i-j]e_ie_je_{-i-j}.\\
   = \sum_{\{(i,j)| -k \leq i+ j \leq k\}}{f}[i]{f}[j]\overline{{f}[i+j]}e_ie_je_{-i-j}.\\ 
\end{align}
Note that the coefficients of the 
degree-two invariant tensor determine
the componentwise absolute value (power spectrum) of the vector $f$.
The coefficients of $T^{SO(2)}_3(f)$, $\{ f[i]f[j] \overline{f[i+j]}\}$
are known as the {\em bispectrum} in the signal processing literature.

We will show that if the Fourier coefficients $f[\ell]$ are non-vanishing for $1 \leq \ell \leq k$ (and hence for $-k \leq \ell \leq -1$) then
the degree-three invariant tensor determines 
a set of equations for the phases of the Fourier coefficients
which determines them up to the action of $SO(2)$. 
Writing
$f[j]=|{f}[j]|e^{i\theta_j}$ then, given the power spectrum,
the third moment determines for all pairs
$(i,j)$ with $i,j \geq 1$ and $i + j \leq k$
\begin{equation}
    a_{ij} = e^{\theta_i+\theta_j-\theta_{i+j}}
\end{equation}
on the unit circle.
\begin{proposition} \label{prop.so2}
For a vector $f\in \R^{2k}$ with $f[i]\neq 0$  for $1\leq i\leq k$, 
the 
$a_{ij}$ defined in~\eqref{eq.aij} determine the unknown phases 
up to translation by an element of $SO(2)$.
\end{proposition}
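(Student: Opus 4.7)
The plan is to use frequency marching. The $SO(2)$ action on the phases is $\theta_m \mapsto \theta_m + m\alpha$ (since $e^{\iota\alpha} \in SO(2)$ acts on $e_m$ with weight $e^{\iota m\alpha}$), and one checks directly that $\theta_i + \theta_j - \theta_{i+j}$ is invariant under this shift, so the $a_{ij}$ are indeed well-defined invariants. I want to show conversely that knowing the $a_{ij}$ recovers the phases up to exactly this one-parameter ambiguity.

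First I would observe that the quantities $a_{1,m}$ for $1 \leq m \leq k-1$ already suffice. Since $f[\ell] \neq 0$ for $1 \leq \ell \leq k$, each $\theta_\ell$ is well-defined modulo $2\pi$. The relation
\begin{equation}
a_{1,m-1} = e^{\iota(\theta_1 + \theta_{m-1} - \theta_m)}
\end{equation}
rewrites as $\theta_m \equiv \theta_1 + \theta_{m-1} - \arg(a_{1,m-1}) \pmod{2\pi}$. So once $\theta_1$ is fixed, $\theta_2$ is determined by $a_{1,1}$, then $\theta_3$ is determined by $a_{1,2}$ and the previously determined $\theta_2$, and so on by induction up to $\theta_k$.

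Next I would verify that the remaining freedom in choosing $\theta_1$ corresponds precisely to the $SO(2)$ action. If one replaces $\theta_1$ by $\theta_1 + \alpha$ and reruns the recursion with the same data $\{a_{1,m}\}$, a straightforward induction shows that the resulting phases satisfy $\theta_m \mapsto \theta_m + m\alpha$, which is exactly the action of $e^{\iota\alpha} \in SO(2)$ on the Fourier coefficients. Thus any two solutions differ by an element of $SO(2)$.

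Since this is essentially a one-line recursion, I do not anticipate a substantive obstacle; the only point requiring care is checking that the remaining $a_{ij}$ with $i \geq 2$ impose no further constraint, i.e.\ that they are automatically consistent with any solution produced by the recursion. But this is immediate from the identity $(\theta_i + \theta_j - \theta_{i+j}) = (\theta_1 + \theta_{i-1} - \theta_i) + \cdots$ telescoping via the $a_{1,\cdot}$ equations, so the higher $a_{ij}$ are forced by the $a_{1,m}$ and contribute no new information.
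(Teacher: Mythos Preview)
Your proposal is correct and follows essentially the same frequency-marching approach as the paper: both arguments use the recursion $e^{\iota\theta_\ell} = a_{1,\ell-1}^{-1} e^{\iota\theta_1} e^{\iota\theta_{\ell-1}}$ to determine all phases from $\theta_1$, leaving exactly the $SO(2)$ ambiguity. The paper's proof is terser (it omits your explicit check that the residual freedom matches the $SO(2)$ action and that the remaining $a_{ij}$ are automatically consistent), but the substance is identical.
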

\begin{proof}
From the $a_{ij} =\{e^{i(\theta_i+\theta_j-\theta_{i+j})}\}$ 
we can recursively express the phase $e^{i \theta_\ell}$ for $\ell > 1$ in
terms of $e^{i \theta_1}$. 
Precisely we have for $\ell \geq 2$
$$e^{i\theta_\ell}=
a_{1,\ell-1}^{-1} e^{i \theta_1} e^{i \theta_{\ell-1}}.$$
\end{proof}
\subsubsection{$O(2)$ Invariants} 
Now for the orthogonal case. The degree-two invariant tensor
$$T_2^{O(2)}(f)=\sum_{0\leq i\leq k} {f}[i]{f}[-i]e_ie_{-i}$$
is the same as the degree-two $SO(2)$-invariant tensor and therefore gives us the power spectrum of $f$.
The difference arises when we look at the third moment.
\begin{equation} 
\begin{split}   
T_3^{O(2)}(f)
&=\sum_{(i,j) \in S}({f}[i]{f}[j]{f}[-i-j]+{f}[-i]{f}[-j]{f}[i+j])(e_ie_je_{-i-j}+e_{-i}e_{-j}e_{i+j})\\
&=\sum_{(i,j)\in S}({f}[i]{f}[j]{f}[-i-j]+\overline{{f}[i]{f}[j]{f}[-i-j]})(e_ie_je_{-i-j}+e_{-i}e_{-j}e_{i+j})
\end{split}  
\end{equation}
where $S = \{(i,j)\in [1,k]^2| i+j \leq k\}$
Assuming all of the $f_\ell$ are non-vanishing
the third moment then determines for each pair of indices $(i,j)$
with $i,j \geq 1$ and $i+j \leq k$
\begin{equation}
\alpha_{i,j}(f) =\cos(\theta_i+\theta_j-\theta_{i+j})
\end{equation}
Since cosine is an even function,
the quantity $\alpha_{i,j}$ determines the angle $(\theta_i + \theta_j -\theta_{i+j})$ up to sign or, equivalently the complex
number $a_{i,j}(f) = e^{\iota (\theta_i + \theta_j - \theta_{i+j})}$ up to complex
conjugation. Precisely, $a_{i,j}(f) = \alpha_{i,j} + i\sqrt{1-\alpha_{i,j}^2}$ or $a_{i,j}(f) = \alpha_{i,j} - i\sqrt{1-\alpha_{i,j}^2}$.
The set of complex numbers $\{a_{i,j}(f)\}_{(i,j)}$ is invariant under
rotation of $f$ by an element of $SO(2)$ and if $r \in O(2) \setminus SO(2)$ 
then $a_{i,j}(r\cdot f) = \overline{a_{i,j}(f)}$.

For a fixed vector $f$, consider the set of real vectors $$f' = \sum_{\ell = 1}^k f'[-\ell]e_{-\ell} + f'[\ell]e_{\ell} \in \R^{2k}$$ that satisfy the equations
$$T_2^{O(2)}(f') = T_2^{O(2)}(f)$$ and 
$$T_3^{O(2)}(f') = T_3^{O(2)}(f).$$
Since $f$ and $f'$ have the same power spectrum we know that 
$f'[\ell] = |f[\ell]| e^{\iota \varphi_\ell}$ for some
angle $\varphi_\ell$. 
Equality of the degree-three
invariants implies that for each pair of indices $(i,j)$ with $i+j \leq k$
$\alpha_{i,j}(f') = \alpha_{i,j}(f)$ where
$\alpha_{i,j}(f') = \cos(\varphi_i + \varphi_j - \varphi_{i+j})$.
The next result shows that under a suitable genericity hypothesis
on the Fourier coefficients of the original vector $f$, a single choice of 
the complex number $a_{p,q}(f') = e^{\iota(\varphi_p + \varphi_q - \varphi_{p+q})}$ determines all other $a_{i,j}(f')$ uniquely from
the degree-three invariants. Because the vector $f'$ is real we 
know that $a_{-i,-j}(f') = \overline{a_{i,j}(f')}$ 
so we need only determine the $a_{i,j}(f')$ with $i,j \geq 1$.
This in turn will imply that the set of $f' \in \R^n$ which satisfy the equations
$T_2^{O(2)}(f') = T_2^{O(2)}(f)$ and $T_3^{O(2)}(f') = T_3^{O(2)}(f)$ 
form a single $O(2)$ orbit.

\begin{lemma} \label{lem.star}
The set of vectors $f = \sum_{\ell =1}^k f[-\ell]e_{-\ell} + f[\ell]e_\ell$ with non-zero Fourier coefficients which satisfies the following condition.\\
(*) There is no non-empty set of indices $S \subset \{(i,j)\subset [1,k]^2| i+j \leq k\}$
and all non-zero integers $n_{i,j}$ such that 
\begin{equation} \label{eq.star}
\prod_{(i,j)\in S} \left(\overline{a_{i,j}(f)}\right)^{n_{i,j}}=\prod_{(i,j)\in S} a_{i,j}(f)^{n_{i,j}}
\end{equation}
is a non-empty $O(2)$-invariant Zariski open set in $\R^{2k}$
\end{lemma}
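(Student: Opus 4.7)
The plan is to reformulate condition (*) as a lattice condition on the phases of the Fourier coefficients. Writing $f[\ell] = |f[\ell]| e^{\iota\theta_\ell}$ and $\phi_{i,j} := \theta_i + \theta_j - \theta_{i+j}$, we have $a_{i,j}(f) = e^{\iota\phi_{i,j}}$; the equation in (*) is then equivalent to $\prod a_{i,j}^{2n_{i,j}} = 1$, i.e., $\sum n_{i,j}\phi_{i,j} \in \pi\Z$, and after expansion in the $\theta_\ell$ this reads $\sum_\ell c_\ell \theta_\ell \in \pi\Z$ for an integer vector $c = c(S, n) \in \Z^k$. Introducing the ratios $u_\ell := f[\ell]/\overline{f[\ell]}$, the condition becomes $\prod u_\ell^{c_\ell} = 1$, which clears to a polynomial equation in the real and imaginary parts of $f$. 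For each non-zero $c \in \Z^k$ this is a proper polynomial equation, cutting out a real algebraic hypersurface $H_c \subset \{f : f[\ell] \neq 0 \text{ for all } \ell\}$.

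Next I would verify the remaining assertions routinely. For $O(2)$-invariance, $SO(2)$ acts by $\theta_\ell \mapsto \theta_\ell + \ell t$, leaving every $\phi_{i,j}$ fixed, while reflection conjugates all $f[\ell]$, which simply replaces $(n_{i,j})$ by $(-n_{i,j})$ in (*). For non-emptiness, any $f$ with all $f[\ell] \neq 0$ and with phases $\theta_1, \ldots, \theta_k$ such that $\{\pi, \theta_1, \ldots, \theta_k\}$ is $\Q$-linearly independent avoids every $H_c$ with $c \neq 0$ and hence satisfies (*).

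The main obstacle is the Zariski-openness assertion together with a preliminary interpretive subtlety. The failure locus of (*) is a priori a countable union $\bigcup_{c \in \Lambda \setminus \{0\}} H_c$, where $\Lambda \subset \Z^k$ is the image of the map $(n_{i,j}) \mapsto c$, and a countable union of proper real algebraic hypersurfaces is not in general Zariski closed; the plan is to extract a finite generating set of $\Lambda$ whose associated hypersurfaces already contain every $H_c$, collapsing the union to a finite one. Compounding this, certain non-trivial tuples $(n_{i,j})$ with all entries non-zero map to $c = 0$ by way of integer linear dependencies among the $\phi_{i,j}$---for example, $\phi_{1,1} + \phi_{2,2} = \phi_{1,2} + \phi_{1,3}$ when $k \geq 4$---and these produce the tautology $1=1$ that would formally defeat (*) for every $f$. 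Therefore the statement of (*) must be interpreted as excluding tuples with $c = 0$ (i.e., considering only genuinely non-trivial integer relations among the $\phi_{i,j}$), after which the Zariski-openness claim reduces to the finite-generation argument above.
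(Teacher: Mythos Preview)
Your reformulation in terms of the lattice vector $c$ and the hypersurfaces $H_c$ is essentially the same manoeuvre the paper performs, but carried out with more care: the paper's proof simply clears denominators to rewrite \eqref{eq.star} as the polynomial equation
\[
\prod_{(i,j)\in S}(f[i]f[j]f[-i-j])^{n_{i,j}}=\prod_{(i,j)\in S}(f[-i]f[-j]f[i+j])^{n_{i,j}},
\]
declares it non-zero, and writes the complement as $\bigcup_S X_S$ indexed only by $S$, with no mention of non-emptiness, $O(2)$-invariance, or the dependence on the integers $n_{i,j}$. Your tautology observation is exactly the content of Proposition~\ref{prop.annihilator}: for $k\ge 4$ there exist all-nonzero integers $m_{i,j}$ with $\sum m_{i,j}\phi_{i,j}\equiv 0$, so that instance of \eqref{eq.star} holds for every $f$ and condition~(*) taken literally is vacuous. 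The intended reading---and the only one used downstream in Proposition~\ref{prop.crux}---is indeed to exclude $c=0$, as you propose.

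There is, however, a gap in your plan for Zariski-openness. A finite generating set $c_1,\ldots,c_r$ of the lattice $\Lambda$ does \emph{not} collapse the union: from $u^{c_i}=1$ for all $i$ one deduces $u^{c}=1$, which gives $\bigcap_i H_{c_i}\subset H_c$, the wrong inclusion for $\bigcup_{c\neq 0} H_c\subset\bigcup_i H_{c_i}$. The failure locus really is a countably infinite union of proper real hypersurfaces and is not Zariski closed. What does follow is that this union is Lebesgue-null, so its complement is Zariski dense, and that is enough for every subsequent use in the paper. Alternatively one can sidestep the issue by noting that Proposition~\ref{prop.crux} only ever invokes \eqref{eq.star} with the \emph{fixed} integers $n_{i,j}=m_{i,j}$ from Proposition~\ref{prop.annihilator}, restricted to the finitely many proper subsets $S$; so only finitely many hypersurfaces are actually needed, though one must then separately check that each such restricted relation has $c\neq 0$.
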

\begin{proof}
We must show that the complement of the set of vectors satisfying
(*) is Zariski closed in the set of vectors with non-zero Fourier coefficients. Given a non-empty set $S$
of indices if a vector $f$ satisfies~\eqref{eq.star} then multiplying
both sides by $|f[i] f[j] f[i+j]|$ it also satisfies the non-zero
real polynomial equation in its Fourier coefficients.
\begin{equation}
\prod_{(i,j)\in S} \left(f[i]f[j]f[-i-j]\right)^{n_{i,j}}=\prod_{(i,j)\in S} \left(f[-i]f[-j]f[i+j]\right)^{n_{i,j}}
\end{equation}
and hence lies in a proper real subvariety $X_S \subset \R^{2k}$.
Thus, the set of vectors $f \in \R^{2k}$ with non-zero Fourier coefficients satisfying (*) is the Zariski open set
$\R^n \setminus \left(\bigcup_S X_S\right)$.
\end{proof}
\begin{proposition} \label{prop.crux}
Suppose that for a pair of indices $(p,q)$ with $p + q \leq k$ we choose one of the two possible values for $a_{p,q}(f')=e^{i(\varphi_t+\varphi_p-\varphi_{p+q})}$ then, if $k \geq 4$, all other $a_{i,j}(f')$ are uniquely determined 
from the corresponding real numbers $\alpha_{i,j}$ provided
that the vector $f$ satisfies condition (*) of Lemma~\ref{lem.star}
\end{proposition}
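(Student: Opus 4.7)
The plan is to encode any admissible alternative $f'$ by a sign pattern. Since $f'$ is real and shares its power spectrum with $f$, each Fourier coefficient of $f'$ differs from that of $f$ only by a phase; equality of the degree-three invariants forces $a_{i,j}(f')\in\{a_{i,j}(f),\overline{a_{i,j}(f)}\}$ for every admissible pair $(i,j)$, and because $|a_{i,j}(f)|=1$ this is the same as $a_{i,j}(f')=a_{i,j}(f)^{\epsilon_{i,j}}$ with $\epsilon_{i,j}\in\{+1,-1\}$. Fixing a choice of $a_{p,q}(f')$ is the same as fixing the single sign $\epsilon_{p,q}$, and the goal is to show that all remaining signs are then forced.

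The key structural input is that, as a family of phase factors coming from the real phases $\varphi_\ell$ of $f'$, the numbers $a_{i,j}(f')$ satisfy the same cocycle identity
\[
a_{i,j}\,a_{i+j,\ell}\;=\;a_{j,\ell}\,a_{i,j+\ell}
\]
as the $a_{i,j}(f)$ do, for every triple $i,j,\ell\geq 1$ with $i+j+\ell\leq k$. Dividing the primed identity by the unprimed one yields, for each such triple, a multiplicative relation whose exponents on the quadruple $Q(i,j,\ell)=\{(i,j),(i+j,\ell),(j,\ell),(i,j+\ell)\}$ are drawn from $\epsilon_{\cdot,\cdot}-1\in\{0,-2\}$. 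Letting $S=\{(i,j):\epsilon_{i,j}=-1\}$, a short case analysis on $m:=|S\cap Q(i,j,\ell)|$ shows that whenever the four elements of $Q(i,j,\ell)$ are distinct (which holds as long as $\ell\neq i$) each of the cases $m\in\{1,2,3\}$ produces a non-trivial monomial in the $a_{i,j}(f)$ with non-zero even exponents equal to $1$, directly contradicting condition $(*)$ of Lemma~\ref{lem.star}; the cases $m=0$ and $m=4$ are vacuous (the latter collapses to the squared cocycle identity for $f$ itself). Hence $S$ must meet every non-degenerate quadruple in either $\emptyset$ or the whole quadruple.

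I would then view the admissible pairs as the vertices of a hypergraph whose hyperedges are the non-degenerate quadruples $Q(i,j,\ell)$, so that the previous paragraph says exactly that $S$ is a union of connected components of this hypergraph. The remaining step is to prove the hypergraph is connected when $k\geq 4$. For $k=4$ the single non-degenerate quadruple $Q(1,1,2)=\{(1,1),(1,2),(1,3),(2,2)\}$ covers every admissible pair, so there is nothing to prove. For $k\geq 5$ I would induct on $k$: any new admissible pair $(a,b)$ with $a+b=k+1$ sits inside the quadruple $Q(1,a-1,b)$ (or $Q(1,1,k-1)$ when $a=1$), whose other entries include a pair of index sum at most $k$, furnishing the inductive link to the already-connected lower level. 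Connectedness forces $S$ to be either empty or the entire index set, and the fixed value of $\epsilon_{p,q}$ selects between these two alternatives, which determines all $a_{i,j}(f')$.

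The main obstacle is the combinatorial accounting for the hypergraph: one must carefully isolate and discard the degenerate triples $\ell=i$ (and the analogous coincidences) that produce quadruples with repeated vertices and contribute only trivial relations, while checking that enough non-degenerate quadruples remain both to pin down the sign constraints and to keep the hypergraph connected for every $k\geq 4$. Once this bookkeeping is in place, the argument is essentially a direct translation of condition $(*)$ through the cocycle structure of the triple-product phases $\theta_i+\theta_j-\theta_{i+j}$.
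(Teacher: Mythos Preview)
Your argument is correct in outline and takes a genuinely different route from the paper. The paper does not use the cocycle identity at all; instead it invokes Proposition~\ref{prop.annihilator}, which produces a \emph{single} linear relation $\sum_{(i,j)} m_{i,j}(t_i+t_j-t_{i+j})=0$ with \emph{all} coefficients $m_{i,j}\neq 0$ (this is where the ``excessive spanning set'' Lemmas~\ref{lem.nullspace}--\ref{lem.hyperplane} enter). Exponentiating gives $\prod_{(i,j)} a_{i,j}(f')^{m_{i,j}}=\prod_{(i,j)} a_{i,j}(f)^{m_{i,j}}=1$, and after cancelling the factors on $S^c$ one is left with $\prod_{S}\overline{a_{i,j}(f)}^{m_{i,j}}=\prod_{S}a_{i,j}(f)^{m_{i,j}}$, which $(*)$ forbids for any nonempty proper~$S$.

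Your approach replaces this single global relation by the family of four-term cocycle relations and a connectivity argument on the resulting hypergraph. This is more elementary---it avoids the linear-algebra detour through excessive spanning sets---and it makes the threshold $k\geq 4$ transparent: for $k=3$ the only triple $i+j+\ell\leq 3$ is $(1,1,1)$, which is degenerate, so the hypergraph has no edges and your argument correctly yields nothing (cf.\ Example~\ref{k=3}). The price is the combinatorial bookkeeping you flag at the end. Two points worth tightening: (i) in your inductive step the quadruple $Q(1,a-1,b)$ already contains two old vertices $(1,a-1)$ and $(a-1,b)$, so the link is immediate, but you should also note that the constraint $i+j+\ell\leq k+1$ is met with equality there; (ii) when you invoke $(*)$ for $m\in\{1,2,3\}$ you are implicitly using that the resulting relation $\prod_{S\cap Q}a_{i,j}(f)^{\pm 2}=1$ is not a formal identity in the phases---which is true, since by the cocycle it reduces to a nonzero partial sum of the $x_{i,j+\ell}$-type forms---but this deserves a sentence, as it is exactly what distinguishes $m\in\{1,2,3\}$ from the vacuous $m=4$ case.
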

\rev{We note that condition (*) is a sufficient condition for the other $a_{i,j}(f')$ to be uniquely determined from the choice
of a single $a_{p,q}$, but we do not know if it is necessary.}
The proof of Proposition~\ref{prop.crux} requires the following
combinatorial proposition.
\begin{proposition} \label{prop.annihilator}
If $k \geq 4$, then there exist  integers $m_{i,j}$ all non-zero such that 
for any $(t_1, \ldots , t_k) \in \R^k$
$$\sum_{\{(i,j)\geq 1|i+j \leq k\mathbb\}
} m_{i,j} (t_i +t_j -t_{i+j}) = 0$$
\end{proposition}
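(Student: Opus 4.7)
The plan is to reformulate the identity as a linear-algebra question about an integer map, establish nontriviality of its kernel, and then show that no coordinate is identically zero on the kernel; a generic rational kernel vector will then have every entry nonzero and can be cleared to integers. Concretely, let $P = \{(i,j) : i, j \geq 1,\ i+j \leq k\}$ and introduce
$$\phi : \mathbb{R}^P \to \mathbb{R}^k, \qquad e_{(i,j)} \mapsto e_i + e_j - e_{i+j}.$$
A tuple $(m_{i,j})_{(i,j) \in P}$ satisfies the identity of the proposition precisely when $(m_{i,j}) \in \ker \phi$. Because $\phi$ has an integer matrix, $\ker \phi$ has a rational basis, and any rational vector in $\ker \phi$ with all entries nonzero scales to an integer vector with all entries nonzero.

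First I would compute $\dim \ker \phi$ via the transpose: the kernel of $\phi^\top$ consists of $t = (t_1, \ldots, t_k)$ satisfying $t_i + t_j = t_{i+j}$ for every $(i,j) \in P$. Applied to the pairs $(1, \ell)$ for $\ell = 1, \ldots, k-1$, these relations alone force $t_\ell = \ell \, t_1$, so $\dim \ker \phi^\top = 1$, $\rank(\phi) = k - 1$, and $\dim \ker \phi = |P| - (k-1)$, which is at least $1$ for $k \geq 4$.

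The main step, and the hardest part, will be to verify that no coordinate functional $e^*_{(i_0, j_0)}$ vanishes identically on $\ker \phi$, equivalently that $e_{(i_0, j_0)} \notin \im(\phi^\top)$ for every $(i_0, j_0) \in P$. For off-diagonal pairs $i_0 \neq j_0$ this is immediate from symmetry: the expression $t_i + t_j - t_{i+j}$ is invariant under swapping $i$ and $j$, so $\phi^\top(t)$ cannot equal $1$ at $(i_0, j_0)$ and $0$ at $(j_0, i_0)$. For a diagonal pair $(i, i)$ with $i \geq 2$, the relations at $(1, \ell)$ are all still available and once more force $t$ to be linear, after which the equation at $(i, i)$ reads $2 i \, t_1 - 2 i \, t_1 = 0 \neq 1$. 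The pinch point is $(1, 1)$, where the $(1, \ell)$ relations for $\ell \geq 2$ alone give only the recursion $t_{\ell+1} = t_1 + t_\ell$ for $\ell \geq 2$; here one uses the relation at $(2, 2)$ — available precisely because $k \geq 4$ — together with $t_4 = t_3 + t_1 = t_2 + 2 t_1$, to pin down $t_2 = 2 t_1$ and hence force $t$ linear, again collapsing the equation at $(1, 1)$ to $0 = 1$.

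Having shown each of the finitely many coordinate functionals cuts out a proper subspace of $\ker \phi$, the complement of their union is Zariski dense; I would pick a rational point there and clear denominators to produce the integers $(m_{i,j})$ sought. The main obstacle is the case analysis above and, specifically, the $(1,1)$ bypass through $(2,2)$ — this is why the hypothesis $k \geq 4$ is indispensable, since at $k = 3$ the kernel of $\phi$ is too small for the proposition to hold.
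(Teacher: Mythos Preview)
Your proof is correct and shares the same overall architecture as the paper's: both reduce the question to showing that no coordinate functional vanishes identically on the space of linear dependences among the forms $t_i+t_j-t_{i+j}$, and then invoke the fact that a finite union of proper subspaces cannot exhaust a vector space to extract an all-nonzero rational (hence integer) element. The paper packages this last step as Lemmas~\ref{lem.nullspace}--\ref{lem.excessive}.

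Where the two arguments diverge is in how the key step (``no coordinate vanishes on the kernel'') is checked. The paper phrases it as the claim that $\{x_{ij}\}$ is an \emph{excessive spanning set} of the hyperplane $V_k^-$ (Lemma~\ref{lem.hyperplane}) and proves this by induction on $k$. You instead dualize, asking whether $e_{(i_0,j_0)}\in\im\phi^\top$, and dispose of off-diagonal indices with a one-line symmetry observation while handling the diagonal indices directly --- isolating $(1,1)$ as the only case genuinely needing $k\geq 4$ via the auxiliary relation at $(2,2)$. This is a slightly cleaner and more conceptual verification than the paper's induction, at the cost of relying on the ordered-pair reading of the index set (so that $(i_0,j_0)$ and $(j_0,i_0)$ are distinct); the paper's inductive argument works equally well for the unordered indexing it implicitly adopts in Lemma~\ref{lem.hyperplane}. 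Either reading suffices for the downstream application, so this is a genuine but minor difference in route.
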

We now prove Proposition~\ref{prop.crux} assuming Proposition~\ref{prop.annihilator}.
\begin{proof}[Proof of Proposition~\ref{prop.crux}]
After possibly replacing $f$ with $r \cdot f$ where $r \in O(2)$ we can assume $a_{p,q}(f') = a_{p,q}(f) = \alpha$.
Then we know that for every other pair of indices $(i,j)$ with $i + j\leq k$,
$a_{i,j}(f') = a_{i,j}(f)$ 
or $a_{i,j}(f') = \overline{a_{i,j}}(f)$.
Let $S$ be the set indices $(i,j)$ such that $a_{i,j}(f') =
\overline{a_{i,j}(f)}$. By assumption $(p,q) \notin S$
so $S$ is a proper subset of the indices $(i,j)$.
By Proposition~\ref{prop.annihilator} applied to the
angles $\theta_i$ and $\varphi_i$ respectively we know that there are 
integers $m_{i,j}$ all non-zero such that
\begin{equation} \label{eq.cancel} \prod_{(i,j)} a_{i,j}(f')^{m_{i,j}} =\prod_{(i,j)}a_{i,j}(f)^{m_{i,j}}=1.
\end{equation}
Since $a_{i,j}(f') = a_{i,j}(f)$ for $(i,j) \in S^c$
we can reduce~\eqref{eq.cancel} to 
\begin{equation}
\prod_{(i,j) \in S}a_{i,j}(f')^{m_{i,j}} = \prod_{(i,j) \in S}a_{i,j}(f)^{m_{i,j}}.
\end{equation}
Since $a_{i,j}(f') = \overline{a_{i,j}(f)}$ for $(i,j) \in S$
this means that $f$ does not satisfy hypothesis (*) unless $S = \emptyset$. 
Thus $\alpha_{i,j}(f') = \alpha_{i,j}(f)$ for all pairs $(i,j)$.
\end{proof}
\begin{proof}[Proof of Proposition~\ref{prop.annihilator}]
To prove Proposition~\ref{prop.annihilator} it suffices to prove 
if $x_1, \ldots , x_k$ is the dual basis for $(\R^k)^*$ then the forms
$x_{ij} = \{x_i +x_j -x_{i+j}\}$ satisfy an equation of linear dependence 
$$\sum_{ij}q_{ij}x_{ij}$$ with $q_{ij} \neq 0$ and rational for all pairs $(i,j)$. Since the $x_{ij}$ are integral linear combinations of
the basis vectors $x_{i}$, we can view them as being elements
of $(\Q^k)^*$ and the result will follow from a few linear algebra
lemmas.
\begin{lemma} \label{lem.nullspace}
Let $V$ be a vector space over an infinite field $K$ and 
let $v_1, \ldots v_m$ be a collection of non-zero vectors in $V$ and let $W=\{(w_1,...,w_m)\}\subset K^m$ 
be the space of solutions to the equation 
$\sum_{i=1}^m w_iv_i=0$. Assume that for each $j$ there is a solution vector
$(w_1,...,w_m)\in W$ with $w_j\neq 0$. Then there is a solution $(w_1,...,w_m) \in W$ with $w_j\neq0$ for all $j$.
\end{lemma}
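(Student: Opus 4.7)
The plan is to deduce this from the classical fact that a vector space over an infinite field cannot be written as a finite union of proper subspaces. First, for each index $j \in \{1, \ldots, m\}$, I would define
\[
W_j = \{(w_1, \ldots, w_m) \in W : w_j = 0\},
\]
which is clearly a $K$-linear subspace of $W$. By hypothesis, for each $j$ there exists some element of $W$ whose $j$-th coordinate is nonzero, so $W_j$ is a proper subspace of $W$. The goal then becomes showing $W \neq \bigcup_{j=1}^m W_j$, since any vector in the complement has all coordinates nonzero and is the solution we seek.

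Next I would invoke the standard fact: if $K$ is infinite and $W$ is a $K$-vector space, then $W$ cannot be covered by finitely many proper subspaces $W_1, \ldots, W_m$. The slickest proof proceeds by induction on $m$. The case $m=1$ is trivial. For the inductive step, if $W = \bigcup_{j=1}^m W_j$ with none of the $W_j$ redundant, pick $u \in W_m \setminus \bigcup_{j < m} W_j$ and $v \in W \setminus W_m$. Then consider the infinitely many vectors $v + \lambda u$ for $\lambda \in K$. None of these can lie in $W_m$ (else $v \in W_m$), so they all lie in $\bigcup_{j<m} W_j$; by pigeonhole at least one $W_j$ with $j < m$ contains two distinct such vectors, hence contains $u$, contradicting the choice of $u$.

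Applying this to our $W$ and the proper subspaces $W_1, \ldots, W_m$ yields an element $(w_1, \ldots, w_m) \in W \setminus \bigcup_j W_j$, which by construction has $w_j \neq 0$ for every $j$. That concludes the argument.

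The only mildly subtle point is the covering lemma, and in particular the essential use of $K$ being infinite — over a finite field the statement fails (e.g., $\mathbb{F}_2^2$ is the union of its three proper one-dimensional subspaces), so the hypothesis that $K$ is infinite cannot be dropped. Everything else is routine bookkeeping, so I do not expect any real obstacle.
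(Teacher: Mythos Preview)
Your argument is correct and follows essentially the same line as the paper: both define the subspaces $W_j = W \cap \{w_j = 0\}$, note each is proper by hypothesis, and conclude that $W$ cannot be covered by them. The only cosmetic difference is that the paper phrases the non-covering step as ``$W$ is irreducible, hence not a finite union of proper closed subsets,'' whereas you give the elementary pigeonhole proof of the same fact.
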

\begin{proof}
By assumption  $W$ is not contained in the hyperplane
$V(x_j)$
for each $j$. Hence, since $W$ is irreducible (because it is a linear subspace) it cannot be contained in the union of the $V(x_j)$
which implies that there exists a solution vector $(w_1, \ldots , w_m) \in K^m$
with all $w_j$ non-zero.
\end{proof}
\rev{
\begin{definition}
Let $V$ be a vector space and 
let $S= \{v_1...,v_m\}\subset V$ be a spanning set with $m>\dim V$. 
If for all $i\leq m$ the set $S_i=\{v_1...,\hat{v_i},...,v_m\}$, where we delete the vector $v_i$, still spans $V$ 
we say that $V$ is an {\em excessive spanning set.}
\end{definition}
\begin{lemma} \label{lem.excessive}
Let $V$ be a vector space over an infinite field $K$ and 
let $S =\{v_1...,v_m\}\subset V$ be an excessive spanning set.
Then there is an equation of linear dependence 
$\sum_{i=1}^m w_iv_i=0$ with $w_i\neq 0$ for all $i$. 
\end{lemma}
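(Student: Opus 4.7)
The plan is to deduce Lemma \ref{lem.excessive} as an almost immediate consequence of Lemma \ref{lem.nullspace}. To apply that lemma to the family $v_1, \ldots, v_m$, I need to verify its hypothesis: that for each index $j$ there exists a solution $(w_1, \ldots, w_m) \in W$ to $\sum_{i=1}^m w_i v_i = 0$ with $w_j \neq 0$. Once this is established, Lemma \ref{lem.nullspace} immediately produces a solution with every coordinate nonzero, which is the desired conclusion.

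To verify the hypothesis, I would fix an index $j$ and exploit the assumption that $S_j = \{v_1, \ldots, \hat{v_j}, \ldots, v_m\}$ still spans $V$. In particular, $v_j$ itself lies in the span of $S_j$, so there exist scalars $c_i \in K$ (for $i \neq j$) with
\begin{equation}
    v_j = \sum_{i \neq j} c_i v_i.
\end{equation}
Rearranging yields the linear dependence $v_j - \sum_{i \neq j} c_i v_i = 0$, which is a vector in $W$ whose $j$-th coordinate equals $1$, hence is nonzero. This argument works for every $j$, verifying the hypothesis of Lemma \ref{lem.nullspace}.

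The only minor point to address is that Lemma \ref{lem.nullspace} nominally requires the $v_i$ to be nonzero. If some $v_i = 0$, the trivial relation $1 \cdot v_i = 0$ gives a dependence with $w_i \neq 0$ and all other coordinates zero, and one can simply absorb such $v_i$ into any combination with arbitrary nonzero coefficient; alternatively, one notes that the proof of Lemma \ref{lem.nullspace} only uses the hypothesis that $W$ is not contained in any coordinate hyperplane $V(x_j)$, which is precisely what we have established. There is no real obstacle here — the lemma is essentially a formal bookkeeping consequence of the previous one, with the content lying in the straightforward observation that the spanning hypothesis on each $S_j$ is equivalent to the existence of a dependence with $w_j \neq 0$.
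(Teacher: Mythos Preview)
Your proof is correct and follows essentially the same route as the paper: both reduce to Lemma~\ref{lem.nullspace} by observing that since each $S_j$ spans $V$, one can write $v_j$ as a combination of the remaining vectors, yielding a dependence with $w_j\neq 0$. Your additional remark about the nonzero hypothesis on the $v_i$ is a fair clarification, but otherwise the arguments are identical.
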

}
\begin{proof}
By Lemma~\ref{lem.nullspace}
it suffices to find, for each $j$, a linear dependence $\sum_{i=1}^kw_iv_i=0$ with $w_j\neq 0$. 
By assumption $S_j$ spans $V$ so $v_j=\sum_{i\neq j} w_iv_i$ 
and hence the equation $\sum_{i=1}^k w_iv_i=0$ holds with $w_j=-1$. 
\end{proof}

Proposition~\ref{prop.annihilator} now follows from the following lemma.
\end{proof}
\begin{lemma} \label{lem.hyperplane}
Let $V^-_k$ be the hyperplane in $(\Q^k)^*$ annihilated
by the vector $(1,2,\ldots , k) \in \Q^k$. 
If $k\geq 2$ then the set of linear forms $\{x_{ij} =x_i+x_j-x_{i+j}\}\subset (\Q^k)^*$ 
spans the hyperplane $V^-_k$ 
and if $k \geq 4$ it forms an excessive spanning set.  
\end{lemma}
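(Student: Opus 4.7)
My plan is to prove the spanning claim by exhibiting an explicit basis of $V^-_k$ inside the set $\{x_{ij}\}$, and then derive excessive spanning from a single cocycle identity together with a finite case analysis.

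Each form $x_{ij}$ lies in $V^-_k$ because $i + j - (i+j) = 0$ on the annihilating vector $(1, \ldots, k)$. For spanning, I would show that the subcollection $\{x_{1, \ell}\}_{\ell = 1}^{k-1}$ is already a basis of $V^-_k$: writing these $k-1$ forms as rows in the dual basis $x_1, \ldots, x_k$, the submatrix in the columns $x_2, \ldots, x_k$ is lower-bidiagonal with $-1$'s on the diagonal and $1$'s on the subdiagonal, hence nonsingular. Since $\dim V^-_k = k - 1$, linear independence gives spanning.

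For the excessive spanning claim the key tool is the cocycle identity
\[
x_{a, b} + x_{a+b, c} \;=\; x_{a, b+c} + x_{b, c}, \qquad a, b, c \geq 1, \qquad a + b + c \leq k,
\]
immediate by expansion. Rearranging yields the substitution rule $x_{p, q} = x_{p, q+r} + x_{q, r} - x_{p+q, r}$, valid whenever $r \geq 1$, $p + q + r \leq k$, and the three pairs on the right are distinct from $\{p, q\}$. By Lemma~\ref{lem.excessive}, excessive spanning reduces to producing, for every $(p, q)$ with $p + q \leq k$, an identity (of this form or a variant derived from iterating the cocycle) that expresses $x_{p, q}$ as a linear combination of other $x_{ij}$'s.

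For pairs with $p + q \leq k - 1$, the rule closes immediately: take $r = 1$ when $p \geq 2$, and $r = 2$ when $p = 1$ and $q \leq k - 3$. The remaining finitely many cases --- boundary pairs with $p + q = k$, extremal low-index pairs such as $(1, 1), (1, 2), (1, k-2), (1, k-1)$, and, for $k$ odd, the central pair $((k+1)/2, (k-1)/2)$ --- are each handled by an ad hoc identity obtained by iterating the cocycle, for instance $x_{p, q} = x_{p-1, q+1} + x_{1, q} - x_{1, p-1}$ for boundary pairs with $p \geq 2$ and $p \neq q+1$, and small identities like $x_{1,1} = x_{1, 2} + x_{1, 3} - x_{2, 2}$ and $x_{1, 2} = x_{1, 1} + x_{2, 2} - x_{1, 3}$ that work precisely when $k \geq 4$. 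The main obstacle is purely combinatorial bookkeeping: for each edge case one must verify that all three substituted pairs simultaneously lie in the valid index range and are genuinely distinct (as unordered pairs) from $\{p, q\}$. The hypothesis $k \geq 4$ is exactly what provides enough room in the index set for such alternatives to exist, consistent with the dimensional fact that for $k = 3$ the index set $\{(1,1), (1,2)\}$ contains only two pairs and so cannot be excessive in the two-dimensional space $V^-_3$.
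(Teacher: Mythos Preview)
Your approach is correct but genuinely different from the paper's. For spanning, you exhibit the explicit basis $\{x_{1,\ell}\}_{\ell=1}^{k-1}$, whose $(k-1)\times(k-1)$ submatrix in the columns $x_2,\ldots,x_k$ is lower bidiagonal and invertible; this is more direct than the paper's induction on $k$. For excessive spanning, the paper also argues by induction: writing $S_{k+1}=S_k\sqcup T_{k+1}$ with $T_{k+1}=\{x_{ij}:i+j=k+1\}$, removal of a vector from $S_k$ still spans $V^-_k$ by the inductive hypothesis and any element of $T_{k+1}$ then lifts the span to $V^-_{k+1}$, while removal of a vector from $T_{k+1}$ is harmless because $|T_{k+1}|\geq 2$ for $k\geq 3$ and $S_k$ together with any single element of $T_{k+1}$ already spans. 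This inductive scheme avoids all case analysis; the base case $k=4$ is checked by hand.

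Your route through the cocycle identity $x_{a,b}+x_{a+b,c}=x_{a,b+c}+x_{b,c}$ also works, but your case list is longer than necessary because you treat pairs as ordered. Exploiting $x_{p,q}=x_{q,p}$ you may always place the larger index first: for $p+q\leq k-1$ with $p=\max(p,q)\geq 2$ take $r=1$; for $p+q=k$ use your boundary identity $x_{p,q}=x_{p-1,q+1}+x_{1,q}-x_{1,p-1}$ with whichever ordering makes $p\geq 2$ and $p\neq q+1$ (for $k\geq 4$ such an ordering always exists, covering in particular $\{1,k-1\}$ via $(p,q)=(k-1,1)$ and the central pair via $(p,q)=(m,m+1)$). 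The only genuinely exceptional pair is $\{1,1\}$, handled by the identity you already wrote down. So the ``combinatorial bookkeeping'' you defer largely evaporates once symmetry is used; as written, though, your sketch asserts several edge cases without exhibiting their identities. The trade-off: your cocycle method makes the redundancy among the $x_{ij}$ visible and constructive, while the paper's induction is shorter and needs no case distinctions.
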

\begin{proof}
First we note that
$\langle x_{ij},(1,2, \ldots , k)\rangle = i + j -(i+j) =0$, so the
span of the $x_{ij}$ lies in $V^-_k$.
Let $L_k$ be the subspace of $(\Q^k)^*$ spanned by the $\{x_{ij}\}$.
We will prove by induction on $k$ that $L_k = V^-_k$.
To start the induction note that if $k =2$ then $V^-_2$ is spanned by $x_{11} =2x_1 -x_2$. 
To obtain the induction step observe that
the embedding $$(\Q^{k})^* \subset (\Q^{k+1})^*,
(l_1, \ldots ,l_k) \mapsto (l_1, \ldots, l_k, 0)$$ embeds $V^-_{k}$ 
into $V^-_{k+1}$. Assume by induction that the $V_k^-$ is spanned by $\{x_{ij}\}_{i+j \leq k}$; i.e. $V^-_k = L_k$. Then
$L_{k+1}$ contains the additional linear form
$x_1 + x_k - x_{k+1}$ which is not in $L_k$. Hence
$\dim L_{k+1} \geq \dim L_k + 1 = k$. On the other hand, we know that $L_{k+1}$ is contained in the hyperplane $V^-_{k+1}$. Therefore,
$L_{k+1} = V^-_{k+1}$ since they are both hyperplanes.

A similar induction will allow us to show that if $k \geq 4$
then the $\{x_{ij}\}_{i+j \leq k}$ is an excessive spanning set of
$V^-_k$. If $k=4$ then any three of the forms 
$$2x_1 -x_2, x_1+x_2 - x_3, x_1 +x_3-x_4,2x_2 - x_4$$ are linearly independent so they
form an excessive spanning set for the hyperplane $V^-_4$.
Assume by induction that $S_k= \{x_{ij}\}_{i+j \leq k}$ is
an excessive spanning set for $V^-_k$. If we let
$S_{k+1} = \{x_{ij}\}_{i+j \leq k+1}$ then $S_{k+1} = S_k \coprod
T_{k+1}$ where $T_{k+1} = \{x_{ij}\}_{i+j = k+1}$.
If we remove a vector $x_{ij} \in S_k \subset S_{k+1}$
then we know that $S_k \setminus \{x_{ij}\}$ spans
$V^-_k$ by induction. Since none of the vectors
in $T_k$ is contained in $V^-_k$ we conclude
that $S_{k+1} \setminus \{x_{ij}\}$ spans all of $V^-_{k+1}$. On the
other hand if $x_{i,k+1-i}$ is any vector in $T_{k+1}$ then
$V^-_{k+1}$ is spanned by $S_k \cup \{x_{i,k+1-i}\}$
since $x_{i,k+1-i} = x_i + x_{k+1-i} -x_{k+1}$.
Thus, as long as $T_{k+1}$ contains at least two vectors, which
is the case if $k \geq 4$,
then we may delete any vector from $S_{k+1}$ and still obtain
a spanning set for $V^-_{k+1}$.
\end{proof}

\begin{proof}[Proof of Theorem~\ref{thm.orthogonal} for $O(2)$]
We first consider the case where $k > 3$.
Given a vector $f$ satisfying the generic condition (*) of Lemma~\ref{lem.star}, suppose that $f'$ has the same degree two and
three invariant moments. Then $f$ and $f'$ have the same power spectrum.
If $k >3$ and $f$ satisfies condition (*) then Proposition~\ref{prop.crux} implies that after possibly replacing $f$ with $r \cdot f$ for some $r\in O(2)$ we have 
$a_{ij}(f') = a_{ij}(f)$ for all pairs of indices $(i,j)$. By Proposition~\ref{prop.so2} this implies that $f'$ and $f$ are in the
same $SO(2)$ orbit after applying a reflection. Hence $f'$ and $f$
are in the 
same $O(2)$ orbit.

If $k =1$ then $V_1$ is the defining representation of $O(2)$ and 
the second moment $f[-1]f[1] = |f[1]|^2$ uniquely determines the orbit.
If $k =2$ and then the second moment of a vector $f$ determines
the power spectrum of $f$ and $\cos(2\theta_1 - \theta_2)$. Rotating
by an element of $SO(2)$ we can make $\theta_1$ arbitrary and for
each choice of $\theta_1$ there are two possible values
for $\theta_2$ which preserve the quantity $\cos(2\theta_1 - \theta_2)$.
Hence, if $f$ has non-vanishing Fourier coefficients,
the set of vectors with same degree two and three invariants consists of a single $O(2)$ orbit.
\end{proof}

\begin{example} \label{k=3}
When $k=3$ the invariants of degree at most three recover the
$O(2)$ orbit of a generic vector up to a list of size two. 
If $f$ and $f'$ have the same invariants of degree at most three
then, after possibly applying a reflection, we can assume $a_{1,1}(f') = a_{1,1}(f)$, and after applying a rotation that $e^{\iota \varphi_1} = e^{\iota \theta_1}$.
Since $a_{1,1}(f') = e^{2 \iota \varphi_1 + \varphi_{-2}} =
a_{1,1}(f) = e^{2 \iota \theta_1 + \theta_{-2}}$.
This implies that $\varphi_{-2} = \theta_{-2}$, and hence
that $\varphi_{2} = \theta_{2}$ since $f'$ and $f$ are the Fourier transforms of real-valued functions. The only other piece of information we have is that 
\begin{equation} \label{eq.uncertain}
\cos(\varphi_1 + \varphi_2 -\varphi_3)=
\cos(\theta_1 + \theta_2 - \theta_3).
\end{equation}
However, for a fixed angle $\theta_3$ there 
are two angles $\varphi_3$ for which~\eqref{eq.uncertain}
holds.
\end{example}
\begin{remark}
It is an interesting question as to the computational difficulty of recovering a vector from the orthogonal bispectrum. If we attempt to do frequency marching as in~\cite{bendory2017bispectrum} we are forced to search over a set of $2^{n-1}$ conjugate possibilities before we determine the correct one. This is probably not optimal. 
\end{remark}
\subsection{Recovering real vectors from cyclic invariants of low degree} \label{sec.cyclic}
The standard representation $\mathbb{C}^n$ of $\Z_n$ is the 
regular representation and by~\cite{bandeira2023estimation, edidin2025orbit}
we
know that the invariants of degree at most three separate generic real and complex orbits. However, we will analyze this case directly as it will aid us in our proof that the dihedral invariants of degree at most three also separate generic dihedral orbits.

\noindent {\bf Case I. $n$ odd.} If $n =2k+1$ is odd, then
the standard representation decomposes as $L_0 + W_k$ where 
$W_k = \oplus_{\ell =1}^k V_\ell$ and $L_0$ is the trivial representation. If we expand $f \in \R^n$ as $f[0]f_0 + \sum_{\ell =1}^k
f[-\ell]e_{-\ell} + f[\ell]e_\ell$ with $f[-\ell] = \overline{f[\ell]}$ then the degree one invariant
gives $f[0]$. We also know that the $SO(2)$-invariants (which
are necessarily $\Z_n$ invariants)
in degree three determine the $f[\ell]$ up to multiplication
by $e^{\iota \ell \theta}$ for some angle $\theta$ which is independent of $\ell$. However if we choose $p,q,r >0$ so that $p+q + r =n$
then $f[p]f[q]f[r]$ is also $\Z_n$-invariant which implies
that $e^{\iota n \theta}=1$; i.e., the ambiguity is only up to to translation by an
element of $\Z_n$.

\noindent{\bf Case II. $n$ is even.}
If $n = 2k+2$ is even then the standard representation decomposes
as $L_0 + L_{-1} + W_k$, where $L_0$ is the trivial representation and $L_{-1}$ and is the character where the generator of $\Z_n$ acts by multiplication by $-1$. If we expand $f \in \R^n$ as
$f= f[0]f_0 + h[-1]f_{-1} + \sum_{\ell =1}^k f[-\ell]e_{-\ell} + f[\ell]e_{\ell}$ then
once again the degree-one invariant is $f[0]$ and the
same argument used in the previous case implies that the degree two and three invariants determine the Fourier coefficients $f[\ell]$
for $l =1,\ldots k$ up to the action of $\Z_n$. The Fourier 
coefficient $h[-1]$ is determined from the $f[\ell]$ and the degree-three
$\Z_n$ invariant function $h[-1]f[1]f[n/2-1]$.
\subsection{Dihedral invariants}
We now follow the same approach for the dihedral invariants. The proofs
for $n=2k+1$ and $n=2k+2$ are essentially identical as was the case for 
the cyclic invariants. 

{\bf Case I. $n=2k+1$, $k \neq 3$.}
Suppose that $T_d^{D_n}(f)$ is known for $d \leq 3$.
The Fourier coefficient $f[0]$ is once again known from the single
degree-one invariant.
By Theorem~\ref{thm.orthogonal} we know that, after possibly
replacing $f$ with its translate by the reflection $s \in D_n$,
the subset
of the $D_n$ invariants which are also $O(2)$ invariants
determine the Fourier coefficients $f[\ell]$ up to multiplication by $e^{\iota \ell \theta}$ for some angle $\theta$. If $p+q +r =n$, then $D_n$-invariance of
$f[p]f[q]f[r] + f[-p]f[-q]f[-r]$ implies that $e^{\iota n \theta} =1$,
so $f$ is determined up to cyclic shift and rotation; i.e., by an element of 
$D_n$.

{\bf Case II. $n=2k+2$, $k\neq 3$.}
The same argument used in the odd case shows that the $D_{n}$
invariants of degree at most three determine $f[0]$
and the Fourier coefficients $f[\ell]$ up to translation by an element
of $D_n$. The coefficient $h[-1]$ is determined from the other Fourier coefficients by the degree-three invariant $h[-1](f[1]f[n/2-1]+f[n-1]f[n/2+1])$.

Because Theorem~\ref{thm.orthogonal} requires $k\neq 3$ we need 
an additional proposition to cover the standard representations
of the dihedral groups $D_7$ and $D_8$.
\begin{proposition}
Let $\mathbb{R}^{7}$ and $\mathbb{R}^8$ be the standard representations of $D_{7}$ and $D_8$ respectively. The $D_n$-orbit
of a generic vector $f\in \mathbb{R}^{n}$ is determined by the 
invariants of degree at most three.
\end{proposition}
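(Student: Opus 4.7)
The plan is to adapt the method of Section~\ref{sec.cyclic} by replacing the use of Theorem~\ref{thm.orthogonal}(2), which fails at $k=3$, with a direct analysis of the extra $\Z_n$-invariants available at degree three. The degree-one invariant yields $f[0]$ and the degree-two invariants yield the power spectrum $|f[\ell]|$ for every $\ell$, so after these reductions the only remaining unknowns are the phase angles $\theta_\ell := \arg(f[\ell])$ for $\ell = 1, 2, 3$ together with the sign of $f[4]$ in the $D_8$ case. The new input beyond the real bispectrum on $W_3$ comes from $\Z_n$-invariant monomials $f[p]f[q]f[r]$ whose indices sum to $\pm n$ instead of $0$: for $D_7$ these contribute $\mathrm{Re}(f[1]f[3]^2)$ and $\mathrm{Re}(f[2]^2 f[3])$; for $D_8$ they contribute $\mathrm{Re}(f[2]f[3]^2)$ together with the mixed invariants $f[4]\,\mathrm{Re}(f[1]f[3])$ and $f[4]\,\mathrm{Re}(f[2]^2)$.

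For $D_7$ I would encode the $W_3$-part of the argument using the integer linear forms $A = 2\theta_1 - \theta_2$, $B = \theta_1 + \theta_2 - \theta_3$, $C = \theta_1 + 2\theta_3$, and $D = 2\theta_2 + \theta_3$. Writing $A', B', C', D'$ for the corresponding forms evaluated at the phases $\varphi_\ell$ of $f'$, each invariant equality $\cos X = \cos X'$ gives $X' \equiv \pm X \pmod{2\pi}$, so a priori there are sixteen sign patterns. The crucial observation is the identity $A + D = B + C$, which holds identically and thus for both $f$ and $f'$. Applied simultaneously, this identity forces, for generic $\theta_\ell$, all four signs to agree. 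In the all-plus case the linear system $A' = A$, $B' = B$, $C' = C$ in $\varphi_1,\varphi_2,\varphi_3$ has coefficient matrix of determinant $7$, hence exactly seven solutions modulo $2\pi$, forming the $\Z_7$-orbit of $(\theta_1,\theta_2,\theta_3)$; the all-minus case produces the reflected orbit, and the two together give the full $D_7$-orbit of size $14$.

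For $D_8$ I would use $A, B$ as above together with $C = \theta_2 + 2\theta_3$, $E = \theta_1 + \theta_3$, and $F = 2\theta_2$, but must first handle $f[4]$. Since the generator $r$ flips the sign of $f[4]$, applying $r$ to $f'$ if necessary arranges $f'[4] = f[4]$; after this normalization the five cosine equalities become $\cos X' = \cos X$ for $X \in \{A,B,C,E,F\}$. The two identities $A + F = B + E$ and $B + C = E + F$ now force generic consistency of signs; solving the resulting linear system, whose coefficient matrix has determinant $8$, yields eight $\Z_8$-translates of $(\theta_1,\theta_2,\theta_3)$, and the reflected case supplies another eight, accounting for all sixteen elements of the $D_8$-orbit. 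The main obstacle in both arguments is verifying the genericity hypothesis: ruling out mixed sign patterns amounts to showing that certain nontrivial integer linear combinations of $\theta_1,\theta_2,\theta_3$ avoid a prescribed discrete set of values. Each such constraint cuts out a proper real subvariety of $\R^n$, so the required Zariski open set is obtained by deleting a finite union of these subvarieties together with the vanishing loci of the Fourier coefficients.
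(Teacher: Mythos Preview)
Your proposal is correct and follows essentially the same route as the paper's proof. For $D_7$ the paper also works with the four phase-invariants coming from the degree-three moments and exploits precisely your identity $A+D=B+C$, written there multiplicatively as $a_{1,2}\,\overline{a_{2,2}}\,a_{1,3}=a_{1,1}$. The only difference is bookkeeping: the paper first fixes one sign via a reflection and then uses the identity together with a genericity condition (the relations~\eqref{eq.deltas}) to force the remaining three, while you run through all $2^4$ sign patterns at once and argue that only the all-$+$ and all-$-$ patterns survive generically; the paper finishes by invoking the cyclic analysis of Section~\ref{sec.cyclic}, whereas you count solutions directly via the determinant. The paper only sketches $D_8$ (``a similar argument will also work''), so your explicit treatment there is a genuine addition.

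One small slip in your $D_8$ count: once you normalize $f'[4]=f[4]$, only the four \emph{even} powers $r^{0},r^{2},r^{4},r^{6}$ preserve this normalization, so the all-$+$ branch yields four solutions, not eight. Concretely, solving only $A'=A$, $B'=B$, $C'=C$ gives eight solutions (determinant $8$), but the remaining constraints $E'=E$ and $F'=F$ cut this to four: the shift $\varphi_\ell=\theta_\ell+2\pi j\ell/8$ gives $E'-E=\pi j$, which lies in $2\pi\Z$ only for even $j$. Together with four reflected solutions from the all-$-$ branch you obtain eight, which is the order of the subgroup of $D_8$ fixing the sign of $f[4]$; undoing the normalization then recovers the full $16$-element orbit. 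This is a harmless miscount and the argument is otherwise sound.
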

\begin{proof}
We will prove the result for $D_7$ but a similar argument will also work for $D_8$. Let $Z \subset \R^7$ be the $D_7$-invariant proper Zariski closed
subset of vectors whose
Fourier coefficients $f[\ell]$ satisfy the relation among $\Z_7$-invariants
\begin{equation}\label{eq.deltas}
\begin{split}
(f[1]f[2]f[-3])^{\delta_1} (f[-2]^2f[-3])^{\delta_2} (f[1]f[3]^2)^{\delta_3} =\\ (f[-1]f[-2]f[3])^{\delta_1} (f[2]^2f[3])^{\delta_2} (f[-1]f[-3]^2)^{\delta_3}
\end{split}
\end{equation}
for some $\delta_1, \delta_2, \delta_3 \in \{0,1\}$ not all zero.
We will prove that for any vector $f \in \R^7 \setminus Z$ if
$f' \in \R^7$ satisfies
$T_d^{D_n}(f') = T_d^{D_n}(f)$ for $d =1,2,3$ then 
$f' = g\cdot f$ for some $g \in D_7$.

Equality of the degree-one invariants immediately implies the equality of Fourier coefficients $f'[0] = f[0]$ corresponding to the trivial summand in $\R^7$. Equality of the second moments
implies that $f'$ and $f$ have the same power spectra.
Thus if $f[\ell] = r_\ell e^{\iota \theta_\ell}$ for some angles
$\theta_1, \theta_2,\theta_3$ then
$f'[\ell] = r_\ell e^{\iota \varphi_\ell}$ for some angles $\varphi_1,\varphi_2, \varphi_3$.

Equality of the degree-three invariants
$$\begin{array}{lll}
f'[1]^2f'[-2] + f[-1]f'[2]^2 &=& f[1]^2f[-2]+f[-1]^2f[2]\\
f'[1]f'[2]f'[-3] + f'[-1]f'[-2]f'[3] & =&f[1]f[2]f[-3]+f[-1]f[-2]f[3]\\ f'[1]f'[3]^2 + f'[-1]f'[-3]^2 & = &f[1]f[3]^2+f[-1]f[-3]^2\\
f'[2]^2f'[3] + f'[-2]^2f'[-3] & = &f[2]^2f[3]+f[-2]^2f[-3]
\end{array}$$
implies that
$$\alpha_{i,j}(f') = \alpha_{i,j}(f)$$
where $\alpha_{i,j}(f) = \cos(\theta_i + \theta_j - \theta_{i+j})$,
$\alpha_{i,j}(f') = \cos(\varphi_i + \varphi_j -\varphi_{i+j})$
and all indices are taken $\bmod \;7$.
After possibly applying a reflection in $D_7$ we can assume that
$a_{1,2}(f') = a_{1,2}(f) = \alpha$ where 
$a_{i,j}(f) = e^{\iota (\theta_i + \theta_j - \theta_{i+j})}$,
$a_{i,j}(f') = e^{\iota (\varphi_i + \varphi_j - \varphi_{i+j})}$
and again all indices are taken $\bmod \;7$.

The relation on the exponential expressions
\begin{align*}
e^{\iota(\varphi_1+\varphi_2-\varphi_3)}e^{-\iota(2\varphi_2+\varphi_3)}e^{\iota(\varphi_1+2\varphi_3)}=e^{\iota(2\varphi_1-\varphi_2)}=\alpha
\end{align*}
implies that
\begin{equation}
a_{1,2}(f')a_{-2,-2}^2(f')a_{1,3}(f')=\alpha=a_{1,2}(f)a_{-2,-2}^2(f)a_{1,3}(f).  
\end{equation}
Let $S$ be the set of pairs $(i,j)$ such 
$a_{i,j}(f') = \overline{a_{i,j}(f)}$. Then for $(i,j) \in S$
we have that $a_{i,j}(f') = a_{-i,-j}(f)$. If $S$ is non-empty
then after multiplying by
the moduli of the Fourier coefficients we obtain an equation
on the Fourier coefficients $f[\ell]$ of the form~\eqref{eq.deltas} with not all $\delta_i = 0$. Thus if $f$ is generic then
$S = \emptyset$; ie. $a_{i,j}(f') = a_{i,j}(f)$.
Hence by the reasoning of Section~\ref{sec.cyclic} we conclude
that $f' = r \cdot f$ for some rotation in $D_{7}$. Hence  $f'$ is obtained from $f$ after applying a possible reflection
and rotation; i.e., by an element of $D_7$.
\end{proof}

\section{Separation of complex dihedral orbits} \label{sec.dihedralproofcomplex}
Thus far we have proved that the $D_n$-orbit of a generic vector
$x \in \R^n$ is determined by its first three invariant tensors.  The argument made crucial use of the fact that degree-two dihedral invariants determine the power spectrum of a real vector. This is no longer the case for vectors $f \in \C^n$. To prove that generic complex orbits are separated by these invariants, we need an additional argument.
By the following Proposition it suffices to show that if $f \in \R^n$
then any {\em complex} vector $f'$ with the same invariants of degree at most three must also be real.

\begin{proposition} \label{prop.realtest}
Let $G$ be a finite group acting on a real vector space $V = \R^n$,
and let $T\subset \R[V]^G$ be a set of invariants which separates the generic real orbits of $V$. Assume in addition the following property holds:\\

(*) For generic $x\in \R^n$ any $y \in \C^n = V \otimes \C$ 
with the property that $p(y) = p(x)$ for all $p \in T$ implies that
$y \in \mathbb{R}^n$\\

Then $\Frac(\C[T]) = \Frac(\C[V]^G)$ and consequently the invariants in $T$
separate the generic orbit of any $x \in \C^n = V \otimes \C$.
\end{proposition}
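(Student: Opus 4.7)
The plan is to argue by contradiction: if $\Frac\C[T]$ were a proper subfield of $\Frac\C[V]^G$, the generic fiber of the $T$-evaluation map on $V_\C$ would strictly contain a single $G$-orbit, whereas hypothesis $(*)$ together with real orbit separation forces equality with $G\cdot x$ for suitably generic real $x$.

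To set this up, I would first replace $T$ by a finite subset generating the same $\C$-subalgebra of $\C[V]^G$ (possible because $\C[V]^G$ is Noetherian). Let $\phi\colon V_\C=\C^n\to\Spec\C[T]$ be the induced morphism and let $\pi\colon V_\C\to\Spec\C[V]^G$ be the finite quotient; the inclusion $\C[T]\subset\C[V]^G$ factors $\phi=\psi\circ\pi$ for a dominant morphism $\psi$ of irreducible affine varieties. Both source and target have dimension $n$: the source because $G$ is finite; for the target, by $(*)$ and real orbit separation the fiber $\phi^{-1}(\phi(x))\subset V_\C$ equals the finite set $G\cdot x$ for generic $x\in V_\R$, so by upper semi-continuity of fiber dimension $\phi$ has $0$-dimensional fibers on a non-empty Zariski open subset of $V_\C$, forcing $\dim\Spec\C[T]=n$. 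Therefore $\psi$ is generically finite of some degree $d=[\Frac\C[V]^G:\Frac\C[T]]$, and over a Zariski open $W\subset V_\C$ the set $\psi^{-1}(\psi(\pi(x)))$ consists of $d$ distinct closed points of $\Spec\C[V]^G$; in particular if $d\geq 2$ then $\phi^{-1}(\phi(x))\supsetneq G\cdot x$ for every $x\in W$.

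To finish I would invoke Zariski density of $V_\R$ in $V_\C$: the intersection of $W\cap V_\R$ with the loci where $(*)$ holds and where $T$ separates real $G$-orbits is Zariski open and non-empty in $V_\R$, so I may pick a real $x$ lying in all three. Then $(*)$ forces $\phi^{-1}(\phi(x))\subset V_\R$, and real orbit separation then forces $\phi^{-1}(\phi(x))=G\cdot x$, contradicting the strict inclusion above. Hence $d=1$ and $\Frac\C[T]=\Frac\C[V]^G$; this equals $\C(V)^G$ because $G$ is finite (remark after Definition~\ref{def.inv}), and the converse half of that same remark then yields that $T$ separates the generic complex orbit. The main subtlety is transporting the three real-point open conditions coherently to a single point of $V_\R$; Zariski density handles this, and is precisely why hypothesis $(*)$ is essential---without it the complex fiber over a generic real point need not even be $0$-dimensional, and the dimension count fixing $\dim\Spec\C[T]=n$ would already fail.
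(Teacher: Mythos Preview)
Your argument is correct and takes essentially the same route as the paper's: both factor $V_\C \to V_\C/G \to \Spec\C[T]$ and use Zariski density of $V_\R$ in $V_\C$ together with the hypotheses to force the second map to be birational---you via an explicit degree count, the paper by observing the map is bijective on a dense set. One small slip: Noetherianity of $\C[V]^G$ does not by itself guarantee that the subalgebra $\C[T]$ is finitely generated; but in the paper's application $T$ is already finite, and in general you may instead pass to finitely many elements of $T$ generating the same fraction field without affecting the conclusion.
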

\begin{proof}
  The inclusion $\C[T] \subset \C[V]^G \subset \C[V]$ induces dominant maps of
  irreducible varieties $\C^n \stackrel{\pi} \to \C^n/G \stackrel{f} \to \Spec \C[T]$.
  By hypothesis, there is a real Zariski dense set of points $y \in \R^n$ such that
  the fibers of $f \circ \pi$ consists of $G$-orbits. Since $\R^n$ is Zariski dense in $\C^n$ in the complex Zariski topology there is a Zariski dense
  set of points $X \subset \C^n$ such that for all $x \in X$, $ (f\circ \pi)^{-1} (f \circ \pi)(x) = \pi^{-1}(\pi(x))$. In other words, the dominant map
  $f \colon \C^n/G \to \Spec \C[T]$ is bijective on the Zariski dense open
  set $\pi(X) \subset \C^n/G$. Hence the map $f$ is birational so
  $\Frac( \C[(T]) = \Frac( \C[V]^G)$ and thus there is a Zariski dense set
  in $\C^n$ of points whose orbits are uniquely determined by the invariants in $U$.
\end{proof}

Following our general proof strategy we will begin by proving this for cyclic invariants, even though this is known~\cite{bandeira2023estimation, edidin2025orbit}.
\subsection{Cyclic group}
\begin{lemma} \label{lem.complexcyclic}
Let $f \in \R^n$ be a vector with all Fourier coefficients $f[\ell]$
non-vanishing. Suppose that $f' \in \C^n$ satisfies 
$T^{\Z_n}_d(f')=T^{\Z_n}_d(f)$ for $d\leq 3$ then $f'\in \mathbb{R}^{n}$; i.e., $f'[-i]=\overline{f'[i]}$ for all $i$. 
\end{lemma}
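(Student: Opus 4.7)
The plan is to work entirely in the Fourier basis, where the cyclic polynomial invariants reduce to monomials, and then to extract modulus and phase data separately from the degree-two and degree-three equations in order to force the reality constraint $f'[-i] = \overline{f'[i]}$.

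First, I would translate the hypothesis into explicit equations. In the Fourier basis, a polynomial $\Z_n$-invariant of degree $d$ is exactly a monomial $f'[i_1]\cdots f'[i_d]$ with $i_1 + \cdots + i_d \equiv 0 \pmod{n}$, so equality of invariant tensors of degree at most three becomes the system
\begin{align*}
f'[0] &= f[0], \\
f'[i]\,f'[-i] &= f[i]\,f[-i] = |f[i]|^2, \\
f'[i]\,f'[j]\,f'[-i-j] &= f[i]\,f[j]\,f[-i-j],
\end{align*}
for all valid indices. In particular the degree-two relation shows that $f'[i] \neq 0$ and that $f'[i]f'[-i]$ is a positive real number, hence $\arg f'[i] + \arg f'[-i] \equiv 0 \pmod{2\pi}$.

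The main step is to show that $|f'[i]| = |f[i]|$ for every $i$. I would introduce the ratios $\rho_i := |f'[i]|/|f[i]|$, well defined and positive by the non-vanishing hypothesis on $f$. Taking moduli of the relations above gives $\rho_0 = 1$, $\rho_i\rho_{-i} = 1$, and $\rho_i \rho_j = \rho_{i+j}$ whenever $i+j \not\equiv 0 \pmod n$. Together these say that $\rho_i\rho_j = \rho_{i+j}$ holds for \emph{all} $i, j \in \Z_n$, so $\rho \colon \Z_n \to \R_{>0}$ is a group homomorphism. Since $\R_{>0}$ is torsion-free, the only such homomorphism is trivial, giving $\rho_i = 1$ for all $i$.

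To conclude, I would combine $|f'[i]| = |f'[-i]| = |f[i]|$ with the phase-cancellation identity $\arg f'[i] + \arg f'[-i] \equiv 0$ from the degree-two equation: this immediately forces $f'[-i] = \overline{f'[i]}$, which is precisely the condition that $f' \in \R^n$. I expect the only minor friction to be organizing the bookkeeping in the middle step, so that the degree-two relation fills in exactly the diagonal $i + j \equiv 0 \pmod n$ missing from the degree-three relations; once one observes that $\log \rho$ is a homomorphism out of a finite group into a torsion-free group, the triviality of $\rho$ is automatic.
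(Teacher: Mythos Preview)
Your argument is correct, and in fact somewhat cleaner than the paper's. Both proofs start identically: work in the Fourier basis, read off $f'[0]=f[0]$, use the degree-two relation $f'[i]f'[-i]=|f[i]|^2>0$ to deduce that the phases of $f'[i]$ and $f'[-i]$ are opposite, and then reduce everything to showing $|f'[i]|=|f'[-i]|$. The divergence is in how that last equality is obtained. The paper compares $f'[i]f'[j]f'[-i-j]$ with its conjugate $\overline{f'[-i]f'[-j]f'[i+j]}$ (using reality of $f$), extracts $r'_ir'_jr'_{-i-j}=r'_{-i}r'_{-j}r'_{i+j}$, builds the recursion $r'_t=(r'_1/r'_{-1})^t r'_{-t}$, and then uses a wrap-around index to force $(r'_1/r'_{-1})^n=1$. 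You instead take moduli directly against $|f[i]|$, encode everything in the ratios $\rho_i=|f'[i]|/|f[i]|$, and observe that the relations $\rho_i\rho_j\rho_{-i-j}=1$ together with $\rho_i\rho_{-i}=1$ say exactly that $\rho\colon \Z_n\to\R_{>0}$ is a homomorphism into a torsion-free group, hence trivial. This replaces the explicit induction and the wrap-around step by a one-line structural observation, and it makes transparent why finiteness of $\Z_n$ is what closes the argument. One small remark: your ``filling in the diagonal $i+j\equiv 0$'' worry is actually moot, since the degree-three relation already holds for \emph{all} pairs $(i,j)$, including $j=-i$; but it does no harm.
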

\begin{proof}
First note that $f'[0]=f[0]\in \mathbb{R}$ by assumption and if $n$ is even $f'[n/2]^2=f[n/2]^2\in \mathbb{R}_{>0}$ which implies $f'[n/2]\in \mathbb{R}$.
In general we have $f'[i]f'[-i] = |f[i]|^2\in \mathbb{R}_{\geq 0}$ which implies that if $f'[i]=r'_ie^{\iota\varphi_i}$ then 
$f'[-i]=r'_{-i}e^{-\iota\varphi_{i}}$. It remains to show that $r'_i=r'_{-i}$ where the indices are taken $\bmod \; n$.

Since $f$ is a real vector we know that $f[-i]= \overline{f[i]}$
for all $i$. In particular for all pairs $(i,j)$ we have the equality
of degree-three $\Z_n$ invariants
$$f[i]f[j]f[-i-j]=\overline{f[-i]f[-j]f[i+j]}$$
where the indices are taken modulo $n$.
Since $f'$ and $f$ have the same degree-three $\Z_n$-invariants 
we can also conclude that 
$$f'[i]f'[j]f'[-i-j]=\overline{f'[-i]f'[-j]f'[i+j]}$$
for all pairs $(i,j)$ with the indices taken $\bmod \;n$.
This in turn 
implies that
\begin{equation}\label{eq.rij}
r'_ir'_jr'_{-i-j}=r'_{-i}r'_{-j}r'_{i+j}.
\end{equation}
In particular 
$r'_2=(\frac{r'_1}{r'_{-1}})^2r'_{-2}$. Substituting this into the relation $r'_1r'_2 r'_{-3} = r'_{-1}r'_{-2}r'_{-3}$
yields the relation $r'_3=(\frac{r'_1}{r'_{-1}})^3r'_{-3}$ and in general 
\begin{equation}\label{eq.magnituderel}
    r'_t=\left( \frac{r'_1}{r'_{-1}}\right)^tr'_{-t}.
 \end{equation}
for $t \leq n/2$, so it suffices to prove that $r'_1/r'_{-1} =1$.

On the other hand if $i + j > n$, 
then substituting~\eqref{eq.magnituderel} into~\eqref{eq.rij} yields
\begin{align}  
r'_ir'_jr'_{n-i-j}=(\frac{r'_1}{r'_{-1}})^ir'_{-i}(\frac{r'_1}{r'_{-1}})^jr'_{-j}(\frac{r'_1}{r'_{-1}})^{n-i-j}r'_{-n+i+j}\\
=(\frac{r'_1}{r'_{-1}})^nr'_{-i}r'_{-j}r'_{-n+i+j}=r'_{-i}r'_{-j}r'_{-n+i+j} \end{align}
which yields $(\frac{r'_1}{r'_{-1}})^n=1$. Since we also know that $r_1, r_{-1}$ are both positive we conclude that $r'_1 = r'_{-1}$.
\end{proof}
\subsection{Dihedral group}
We conclude the proof of Theorem~\ref{thm.dihedral} by proving a similar lemma for the dihedral group.
\begin{lemma}
Let $f \in \R^n$ be a vector with all Fourier coefficients $f[\ell]$
non-vanishing. Suppose that $f' \in \C^n$ satisfies 
$T^{D_n}_d(f')=T^{D_n}_d(f)$ for $d\leq 3$ then $f'\in \mathbb{R}^{n}$; i.e., $f'[-i]=\overline{f'[i]}$ for all $i$. 
\end{lemma}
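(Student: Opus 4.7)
The plan is to mimic the proof of Lemma~\ref{lem.complexcyclic}, exploiting the decomposition
\[
T^{D_n}_d(f) = \tfrac{1}{2}\bigl(T^{\Z_n}_d(f) + T^{\Z_n}_d(sf)\bigr),
\]
where the reflection $s$ acts in the Fourier basis by $(sf)[\ell]=f[n-\ell]$. For real $f$, this coincides with componentwise complex conjugation, so the $(i_1,\ldots,i_d)$-entry of $T^{D_n}_d(f)$ equals $\operatorname{Re}\bigl(f[i_1]\cdots f[i_d]\bigr)$ whenever $i_1+\cdots+i_d\equiv 0\pmod n$.

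As in the cyclic case, I would first extract the preliminaries: from the degree-one invariant, $f'[0]=f[0]$; from the degree-two entry at $(n/2,n/2)$ when $n$ is even, $f'[n/2]^2=f[n/2]^2\in\R_{>0}$, hence $f'[n/2]\in\R$; and from the entry at $(i,-i)$, $f'[i]f'[-i]=|f[i]|^2$. Writing $f'[i]=r'_ie^{\iota\varphi_i}$ with $r'_i>0$, this forces $\varphi_{-i}=-\varphi_i$ and $r'_ir'_{-i}=r_i^2$, where $r_i=|f[i]|$. It then remains only to prove $r'_i=r'_{-i}$ for every $i$.

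The heart of the argument comes from the degree-three equation at each triple $(i,j,-i-j)$ with $i,j,i+j\not\equiv 0\pmod n$:
\[
f'[i]f'[j]f'[-i-j] + f'[-i]f'[-j]f'[i+j] = 2\operatorname{Re}\bigl(f[i]f[j]f[-i-j]\bigr).
\]
Writing $f[\ell]=r_\ell e^{\iota\theta_\ell}$ and separating real and imaginary parts yields
\begin{align*}
(\alpha_{i,j}+\beta_{i,j})\cos X_{i,j} &= 2 r_i r_j r_{i+j}\cos(\theta_i+\theta_j-\theta_{i+j}),\\
(\alpha_{i,j}-\beta_{i,j})\sin X_{i,j} &= 0,
\end{align*}
where $\alpha_{i,j}=r'_ir'_jr'_{-i-j}$, $\beta_{i,j}=r'_{-i}r'_{-j}r'_{i+j}$ and $X_{i,j}=\varphi_i+\varphi_j-\varphi_{i+j}$. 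The main obstacle is the anomalous branch of the second equation in which $\sin X_{i,j}=0$ and $\alpha_{i,j}\neq\beta_{i,j}$: in that branch $\cos X_{i,j}=\pm 1$, so the first equation together with the identity $\alpha_{i,j}\beta_{i,j}=r_i^2r_j^2r_{i+j}^2$ and the AM--GM inequality forces $|\cos(\theta_i+\theta_j-\theta_{i+j})|\geq 1$, equivalently $\operatorname{Im}\bigl(f[i]f[j]f[-i-j]\bigr)=0$. This is a proper Zariski-closed condition on $f$, which I would add to the exceptional locus (alongside the vanishing of any Fourier coefficient).

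Outside this locus the second equation gives $\alpha_{i,j}=\beta_{i,j}$ for every admissible $(i,j)$. Setting $\rho_i=r'_i/r'_{-i}$, these relations read $\rho_i\rho_j=\rho_{i+j}$; combined with $\rho_0=1$ and $\rho_{-i}=\rho_i^{-1}$, they extend $\rho$ to a group homomorphism $\Z_n\to\R_{>0}$. Since $\R_{>0}$ is torsion-free, $\rho\equiv 1$, so $r'_i=r'_{-i}$ for every $i$; hence $f'[-i]=\overline{f'[i]}$, and $f'\in\R^n$, as required.
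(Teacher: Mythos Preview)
Your argument follows the same line as the paper's and is correct except for one point where you stop too early and unnecessarily weaken the conclusion.

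In the ``anomalous'' branch $\sin X_{i,j}=0$, $\alpha_{i,j}\neq\beta_{i,j}$, your AM--GM step actually finishes the job: since $\alpha_{i,j}\neq\beta_{i,j}$ the inequality $\alpha_{i,j}+\beta_{i,j}\geq 2\sqrt{\alpha_{i,j}\beta_{i,j}}$ is \emph{strict}, so from $\alpha_{i,j}+\beta_{i,j}=2r_ir_jr_{i+j}\,|\cos(\theta_i+\theta_j-\theta_{i+j})|$ you would get $|\cos(\theta_i+\theta_j-\theta_{i+j})|>1$, a contradiction. Hence that branch is empty; no extra genericity condition on $f$ is needed, and the lemma holds under the stated hypothesis that the Fourier coefficients of $f$ are all nonzero. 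The paper reaches the same conclusion by writing $\beta=r'_ir'_jr'_{-i-j}$ as a root of $t^2-2\alpha\cos(\theta_i+\theta_j-\theta_{i+j})\,t+\alpha^2=0$ (with $\alpha=r_ir_jr_{i+j}$) and observing that the discriminant $4\alpha^2(\cos^2-1)$ is nonpositive, which forces $\beta=\alpha$ and likewise $r'_{-i}r'_{-j}r'_{i+j}=\alpha$. Your AM--GM formulation is an equivalent (and arguably cleaner) packaging of the same inequality.

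Your closing step---viewing $\rho_i=r'_i/r'_{-i}$ as a homomorphism $\Z_n\to\R_{>0}$ and using torsion-freeness---is exactly the content of the paper's reference back to the cyclic lemma (which derives $r'_t=(r'_1/r'_{-1})^t r'_{-t}$ and then $(r'_1/r'_{-1})^n=1$), stated more invariantly.
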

\begin{proof}
The degree 2 invariants are the same as in the cyclic case and so we obtain ${f'}[i]{f'}[-i]\in \mathbb{R}$ implying that if $f'[i]=r'_ie^{i\varphi_i}$ then $f'[-i]=r'_{-i}e^{-i\varphi_i}$ $\forall i$. All that remains to show is that $r'_i=r'_{-i}$. 
As in the cyclic case, we have $f'[0]\in\mathbb{R}$ and when $n$ is even $f'[n/2]\in \mathbb{R}$ from the degree 1 and 2 invariants respectively.

The rest of the argument is similar to the cyclic case.
For each pair $(i,j)$ we know that the dihedral invariant $f[i]f[j]f[-i-j] + f[-i]f[-j]f[-i-j]$ is real so 
$$f'[i]f'[j]f'[-i-j]+f'[-i]f'[-j]f'[i+j]= r'_{i}r'_{j}r'_{-i-j}e^{\iota(\varphi_i+\varphi_j-\varphi_{i+j})}+r_{-i}r_{-j}r_{i+j}e^{\iota(-\varphi_i-\varphi_j+\varphi_{i+j})}$$ is also real.
%\josh{r' and $\varphi$ instead of $\theta$}

Thus, 
$r'_{i}r'_{j}r'_{-i-j}\sin(\varphi_i+\varphi_j-\varphi_{i+j})-r'_{-i}r'_{-j}r'_{i+j}\sin(\varphi_i+\varphi_j-\varphi_{i+j})=0$ and hence $r'_ir'_jr'_{-i-j}=r'_{-i}r'_{-j}r'_{i+j}$ unless $\varphi_{i} + \varphi_j -\varphi_{i+j}$ is a multiple of $\pi$.
However, if $\varphi_{i} + \varphi_j -\varphi_{i+j}$ is multiple
of $\pi$ we can show that $r'_ir'_j r'_{-i-j} = r'_{-i}r'_{-j}r'_{i+j}$ with a different argument.
Since 
$f$ is the Fourier 
transform of a real vector, then the equality of second moments
$T_2(f') = T_2(f)$ implies that $r'_\ell r'_{-\ell} = r_\ell^2$ for; i.e.,
$r'_{-\ell} = r_\ell^2/r'_{\ell}$.  If $\sin(\varphi_i + \varphi_j -\varphi_{i+j}) = 0$ and we set
$\beta = r'_i r'_j r'_{-i-j}$ and $\alpha = r_i r_j r_{-i-j}$
then equality of degree-three invariants
$$f'[i]f'[j]f'[-i-j]+f'[-i]f'[-j]f'[i+j] = f[i]f[j]f[-i-j] + f[-i]f[-j]f[-i-j]  = 2\alpha \cos(\theta_i + \theta_j - \theta_{i+j})$$
implies that the non-negative real number $\beta$ satisfies the quadratic equation
\begin{equation} \label{eq.quadratic} \beta^2 -2 \beta\alpha \cos(\theta_i + \theta_j - \theta_{i+j}) + \alpha^2 =0
\end{equation}
which is impossible unless $\theta_i + \theta_j - \theta_{i+j}$ is also also multiple of $\pi$, since the discriminant of~\eqref{eq.quadratic} is $4\alpha^2(\cos^2(\theta_i + \theta_j -
\theta_{i+j}) - 1)$. In this case we must have $\beta = \alpha$
because they are both positive; i.e., $r'_ir'_jr'_{-i-j} = r_j r_j r_{i-j}$. We also know that $\varphi_{-i} + \varphi_{-j} -\varphi_{-i-j}$ is also a multiple of $\pi$ so the same argument implies that $r'_{-i}r'_{-j}r'_{i+j} = r_{-i}r_{-j}r_{i+j} = r_{i}r_{j}r_{-i-j}$, where the last equality holds because $f$ is real
\end{proof}
\begin{example}
  If some of the Fourier coefficients of a real vector $f$ vanish then there can be non-real vectors $f'$ with the same dihedral invariants of degree at most three. For example if $f \in \R^5$ is the vectors whose Fourier
  transform is $(1,1,0,0,1)$ then the complex (but not real) vector
$f' \in \C^5$ whose Fourier  transform is $(1,1/2,0,0,2)$ has the same $D_5$ invariants of degrees at most three. In order to separate these orbits we need to use invariants of degree five. 

In general, for a signal $x\in \mathbb{R}^n$ we require invariants of degree at least $n$ to separate all $D_n$ orbits \cite{kohls2010degree}.
\end{example}

\begin{example}
The real vector $f$ whose coordinates in the Fourier basis are
$$(f[0],f[1],f[2],f[3],f[4]) = (1,\iota, -\iota, \iota, -\iota)$$
has non-vanishing dihedral invariants of degree one and two.
However, the polynomial invariants of degree three 
$$f[1]f[2]^2 + f[4]f[3]^2, f[1]^2f[3] + f[4]^2f[2]$$
both vanish, and in this case the $D_n$ orbit is not uniquely determined by these invariants. Indeed the system of equations
\begin{equation}
\left\{\begin{array}{l}
f'[0]  =  1\\
f'[1]f'[4]  = 1\\
f'[2]f'[3]  =  1\\
f'[1]^2 f'[3] + f'[4]^2 f'[2] = 0\\
f'[1]f'[2]^2 + f'[4]f'[3]^2  =  0
\end{array}\right.
\end{equation}
has twenty solutions corresponding to two real orbits of the dihedral
group. Specifically the orbit of the real vector $f'$ whose Fourier coordinates
are $(1, -\iota, -\iota, \iota, \iota)$ has the same dihedral invariants but
is not dihedrally equivalent to $f$. Note that $f'$ does not
have the same cyclic invariants since $f'[1] f'[2]^2 = \iota$
but $f[1]f[2]^2 = -\iota$
\end{example}
\section{Open questions}
\subsection{Unitary invariants}
Tannaka-Krein duality implies that for the complex regular representation of a finite group (and more generally any compact group) the unitary invariants of degree at most three (the bispectrum) also separate generic orbits~\cite{smach2008generalized}.
By Theorem~\ref{thm.dihedral} we know that polynomial invariants of degree at
most three separate generic orbits in the `standard' representation $\C^n$ of $D_n$. However, the question as to whether the corresponding unitary invariants separate generic orbits remains open. For unitary invariants the term generic means that these invariants
separate orbits on a non-empty real Zariski open set in $\C^n = \R^{2n}$.
Since unitary and polynomial invariants agree on real vectors we know that unitary invariants separate generic vectors in the $\R^n \subset \C^n$ (ie vectors whose imaginary components are zero).
However, $\R^n$ is a proper closed subset of $\C^n$ in the real Zariski topology, so we cannot apply the density argument used in the proof of Proposition~\ref{prop.realtest}
to conclude that the generic orbit is separated by invariants of degree at most three.

\subsection{Invariants over other fields}
\cite[Theorem III.1]{edidin2025orbit} states that, for the regular representation defined over any infinite field, the polynomial invariants of degree at most three separate generic orbits. This leads to the question as to whether Theorem~\ref{thm.dihedral} holds for the standard $n$-dimensional representation defined over an arbitrary infinite field, or at least a field whose characteristic does not divide $2n$. The proof given here does not have an immediate extension to other fields. Likewise, the proof of \cite[Theorem III.1]{edidin2025orbit} makes essential use of the fact that the generic orbit in the regular representation consists of linearly independent vectors.

\section{Sample complexity and comparison with EM}

As is the case for the cyclic group, Theorem~\ref{thm.dihedral} establishes that the generic orbit can be identified from its third moment. However, the efficient frequency marching algorithm~\cite{bendory2017bispectrum} which works in the cyclic case becomes combinatorially difficult in the dihedral case. The reason is that reduction to a system of linear phase equations requires searching over $2^n$ possible conjugate configurations, most of which are inconsistent. Another approach that has been shown to work for the cyclic bispectrum is optimization.
In this approach we look for a global minimum of the the following loss function:
\begin{equation}
\label{eq:obj_func}
    \argmin_x \|\tilde{B} - B(x)\|_\text{F}^2,
\end{equation}
where~$\tilde{B}$ is the true bispectrum and~$B(x)$ is the computed bispectrum of a signal $x\in \mathbb{R}^n$. In~\cite{bendory2017bispectrum} the authors gave substantial evidence for the efficacy of this approach by showing empirically that this non-convex algorithm appears to converge to the
target signal with random initialization. Although this is a seemingly challenging polynomial optimization problem, when computed in the Fourier basis the entries of the bispectrum are degree 3 monomials and hence optimization is seemingly more feasible. In fact, because the bispectrum consists of monomials, the equations become linear under a logarithmic transformation.

We can attempt the same optimization procedure with the dihedral third moment $\tilde{T}$. However, because $D_n$ is no longer abelian, the entries of the third moment no longer consist of monomials. Instead, the entries are binomials of the form \begin{equation}
T(x)[k_1,k_2]=\hat{x}[k_1]\hat{x}[k_2]\hat{x}[k_3]+\overline{\hat{x}[k_1]\hat{x}[k_2]\hat{x}[k_3]}\end{equation} where $k_1+k_2+k_3\equiv 0 \mod n$. 

We will give numerical evidence that the corresponding dihedral optimization problem converges with random initialization. 

In the following experiments we sample from the model
\begin{equation} \label{eq:sampling}
y_i=g_ix+\epsilon_i
\end{equation}
where $g_i$ is taken from a uniform distribution on either $D_n$ or $\mathbb{Z}_n$. Following the methods in \cite{bendory2017bispectrum}, we construct an unbiased estimator for the bispectrum or dihedral third moment by mean centering the estimate. i.e we construct an estimate for the third moment tensor of $x-\mu(x)$.

\begin{equation*}
\tilde{M}_{x-\mu(x)}=\frac{1}{N}\sum_{i=1}^N M(y_i-\mu(x)) 
\end{equation*}
where $M=B$ is the bispectrum or $M=T$ is the reflection-invariant bispectrum.

After estimation we solve the resulting optimization problem with random initialization. We initialized the optimization algorithm from 10 different random initial
guesses and chose the one that yields the smallest value when evaluated in the cost function \eqref{eq:obj_func}. We will see that using more than one initialization is more valuable in the dihedral case where we occasionally see complete failure to converge to a local minimum whereas in the cyclic case this is not such an issue.

To exemplify this we looked at the errors incurred from 10 trials. \rev{In each trial we use a fixed vector $x\in \mathbb{R}^{37}$ whose entries are generated iid from $\mathcal{N}(0,1)$}. We take $10,000$ samples of the form~\eqref{eq:sampling} with $\sigma=2$ (we use the same vector for the cyclic and dihedral case). In the first regime each trial uses only one random initialization whereas in the second we take 10 random initializations and then use the one with lowest cost.

\begin{figure}[H]
    \centering
    \includegraphics[width=\textwidth, height=0.35\textheight, keepaspectratio]{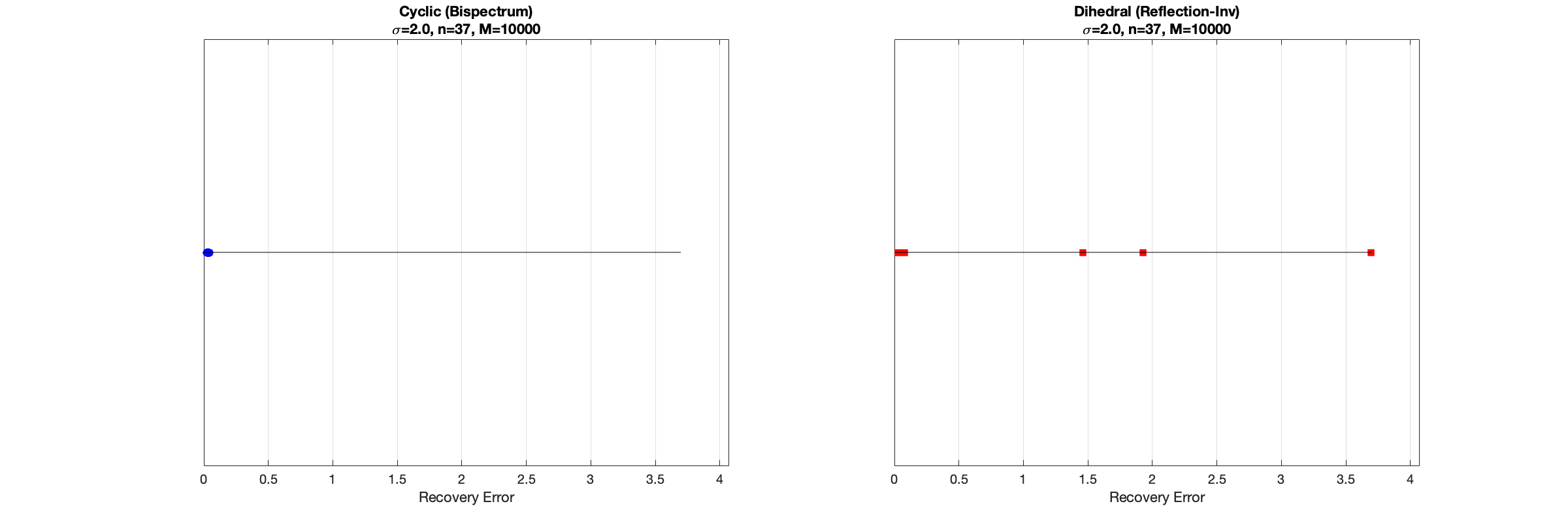}
    \label{fig:failure with 1 init}
\end{figure}

In the dihedral case, although 7 of the 10 trials approximately converge to the true vector, 3 of the trials completely fail to converge. In the cyclic case they all correctly converge with just one initialization.

Now we look at the improvement gains from using 10 initializations.

\begin{figure}[H]
    \centering
    \includegraphics[width=\textwidth, height=0.35\textheight, keepaspectratio]{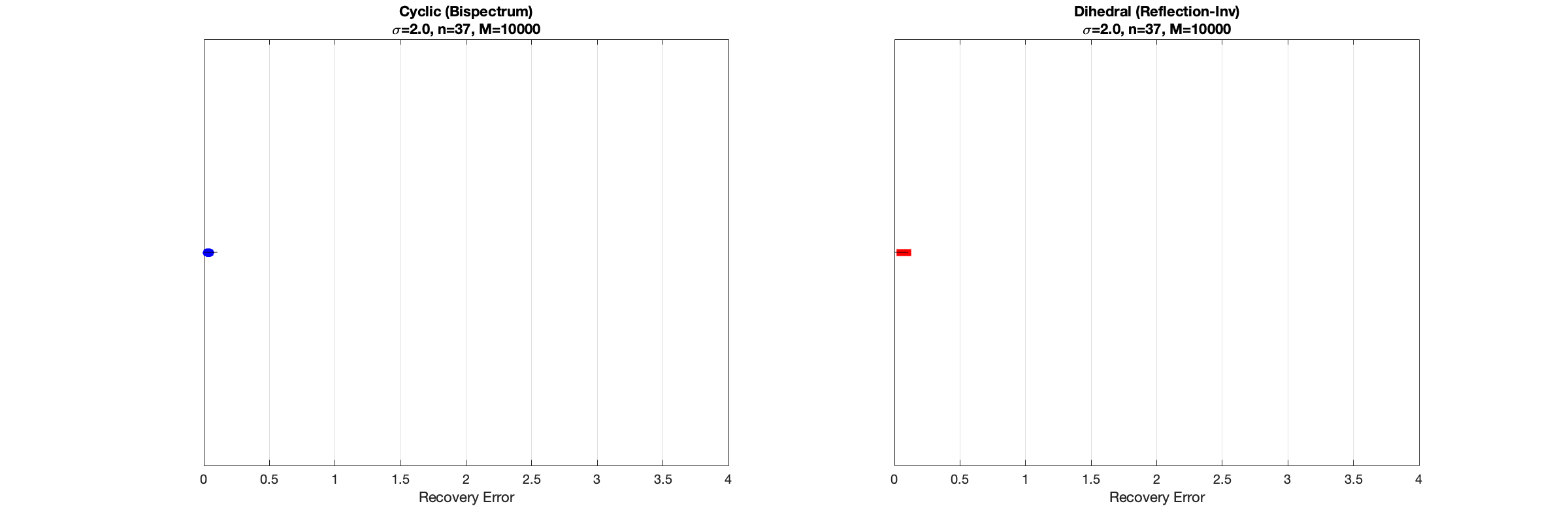}
    \label{fig:failure with 10 init}
\end{figure}

Hence going forward we will always use 10 random initializations and choose the one with the lowest cost (in the cyclic case this isn't necessary but we use the same regime for the sake of consistency).

In all trials we use a maximum of 200 iterations. We note that the cyclic case was implemented in \cite{bendory2017bispectrum} with optimization using the toolbox Manopt \cite{boumal2014manopt} which optimizes on the manifold of phases of the signal (as opposed to our direct implementation on the fourier coefficients). In keeping with the analysis in \cite{bendory2017bispectrum} for the cyclic group and \cite{bendory2022dihedral} for the dihedral group, we compare the moment based approach with the MLE approach implemented with expectation maximization. In keeping with \cite{bendory2022dihedral}, we initialized the EM algorithm from a single random point and halted it
when the difference of the likelihood between two consecutive iterations dropped below $10^{-4}$ or after a maximum of 400 iterations.

In the following experiments we use 10,000 samples with values of $\sigma$ between $.01$ and $10$. For each value of $\sigma$ we run 10 trials and then average the error of these trials.

\begin{figure}[H]
    \centering
    \includegraphics[width=\textwidth, height=0.35\textheight, keepaspectratio]{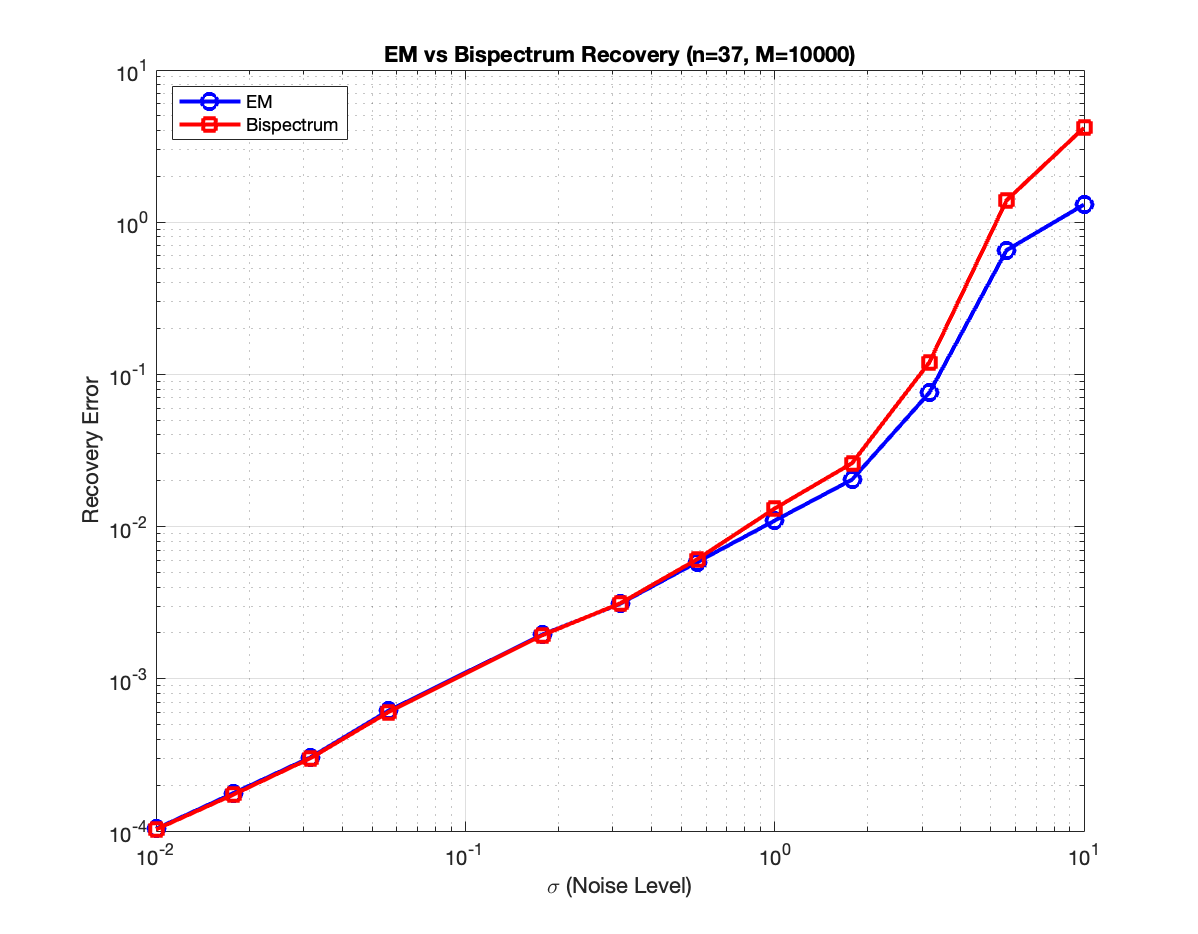}
    \label{fig:Cyclic bispectrum vs EM}
\end{figure}

\begin{figure}[H]
    \centering
    \includegraphics[width=\textwidth, height=0.35\textheight, keepaspectratio]{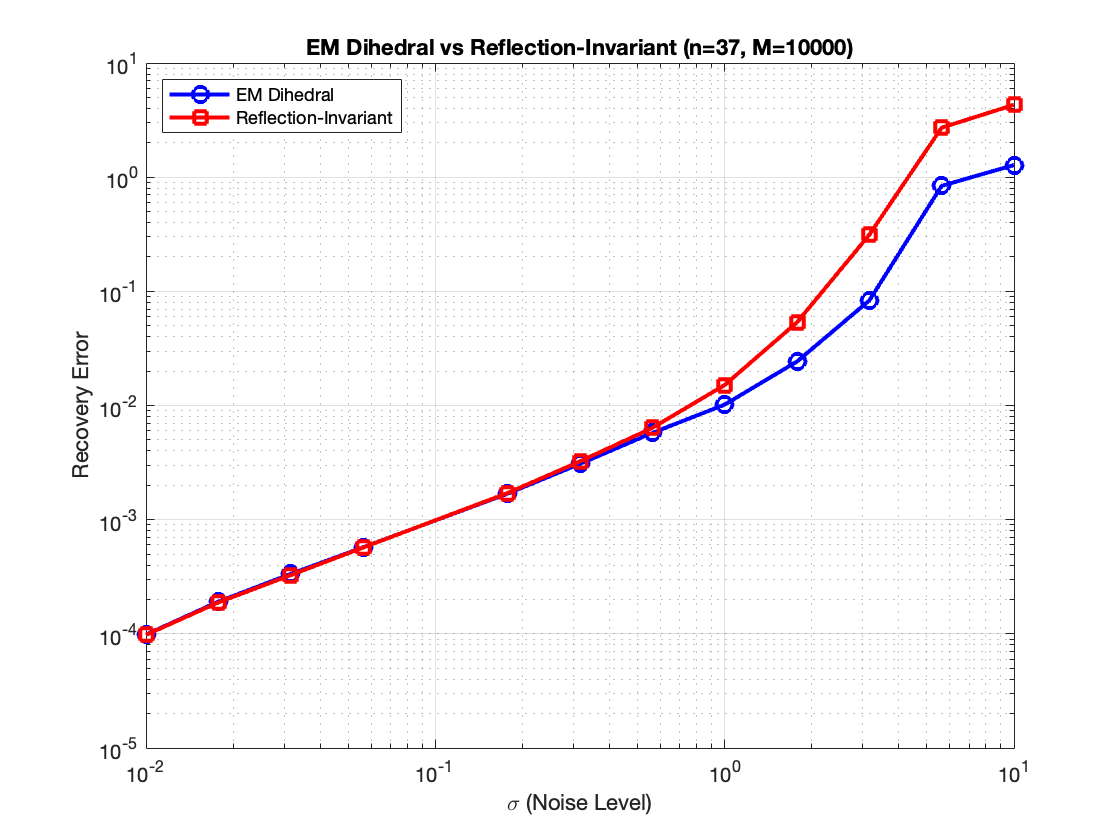}
    \label{fig:Dihedral invariant vs EM}
\end{figure}

When the signal length is below a certain threshold the error can be more substantial. This can be seen in the following plot which repeats the above experiment with a signal of length ranging from 5 to 100. This error is more substantial in the dihedral model than in the cyclic model. We keep $\sigma=1$ fixed.

\begin{figure}[H]
    \centering
    \includegraphics[width=\textwidth, height=0.35\textheight, keepaspectratio]{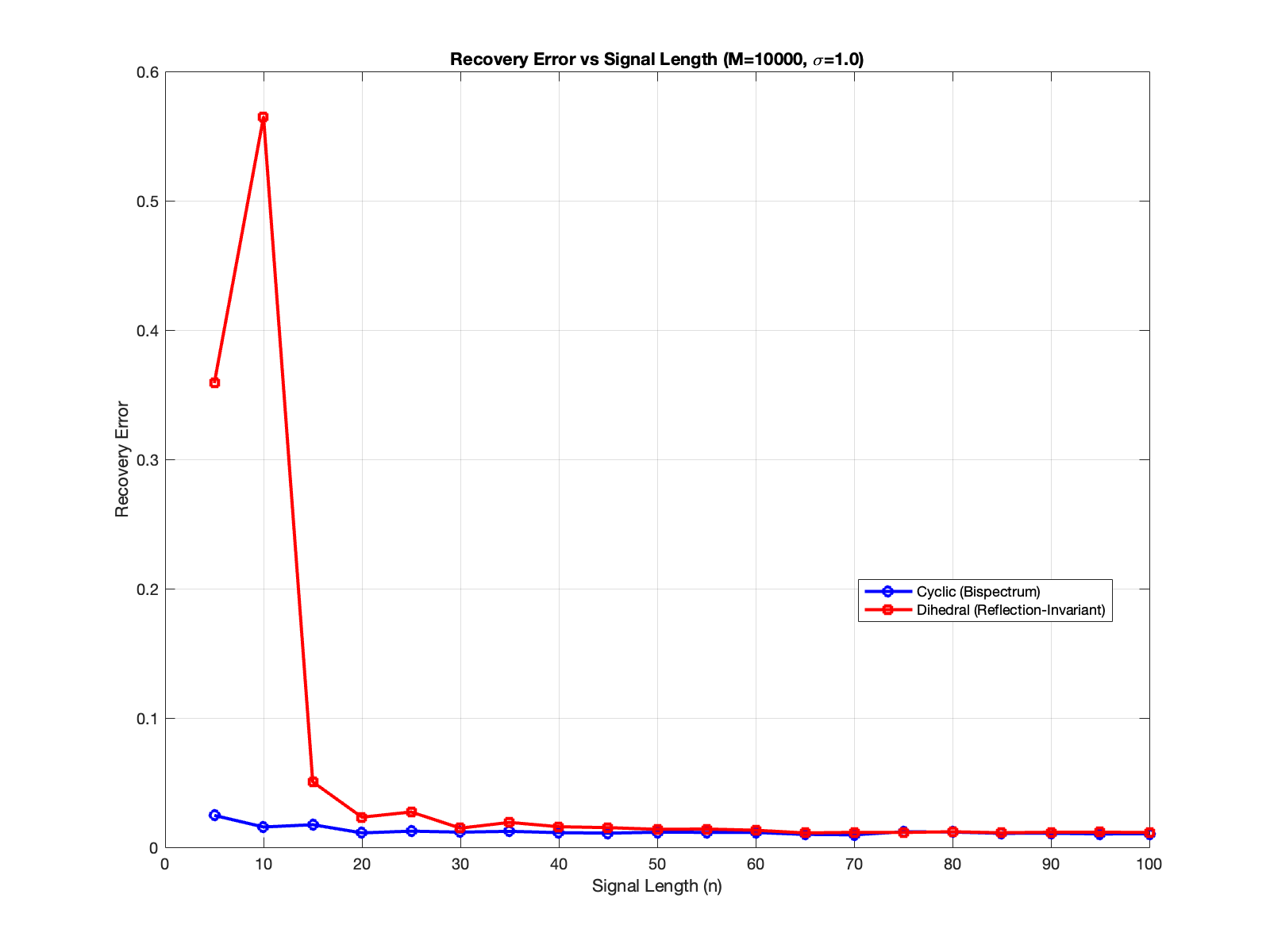}
    \label{fig:Dihedral invariant vs EM}
\end{figure}

The relatively high error for small length signals can be understood as a consequence of the growth rate of the number of terms in the bispectrum. The number of equations in the bispectrum is on the order of $\sim\frac{n^2}{6}$. Hence when $n>>0$, the number of equations quickly outgrows the dimension of the space of signals. Similarly, the number of equations in the dihedral third moment is on the order of $\sim\frac{n^2}{12}$ and hence when $n$ is small, the error will be more pronounced. As an example, when $n=5$ there are 9 distinct equations in the bispectrum which must solve for 5 unknowns. For the dihedral third moment there are exactly 5 equations in the 5 unknowns making the optimization much less robust.
%\printthebibliography
%\bibliographystyle{plain}
%\bibliography{ref.bib}
%\end{document}

\end{document}